\documentclass[11pt]{article}
\usepackage{amsmath,amssymb,amsthm}
\usepackage{mathrsfs}
\usepackage[margin=1in]{geometry}
\usepackage{hyperref}
\hypersetup{colorlinks=true,linkcolor=blue,citecolor=blue,urlcolor=blue,
      pdftitle={Amplified moments and a refinement of the quasi-Riemann hypothesis},
  pdfauthor={Yuanda Xu, Hejian Sang}}
\newtheorem{theorem}{Theorem}[section]
\newtheorem{proposition}[theorem]{Proposition}
\newtheorem{lemma}[theorem]{Lemma}
\newtheorem{corollary}[theorem]{Corollary}
\theoremstyle{remark}
\newtheorem{remark}[theorem]{Remark}
\newcommand{\N}{\mathrm N}
\newcommand{\OO}{\mathcal O}
\newcommand{\eps}{\varepsilon}
\newcommand{\Rea}{\operatorname{Re}}
\newcommand{\Bnew}{B_1}
\title{Amplified moments and a refinement\\
of the quasi-Riemann hypothesis}
\author{Yuanda Xu \quad Hejian Sang}
\date{October 2026}

\begin{document}
\maketitle

\begin{abstract}
Recently, OpenAI \cite{OAI26} announced a uniform \(7/8\)
zero-free half-plane for finite-order Hecke \(L\)-functions
over \(\mathbb Q(\sqrt{-3})\), together with its transfer
to Dirichlet \(L\)-functions. Assuming the analytic inputs
of that work, we improve the boundary from \(0.875\)
to \(0.8748867\) for the same families.
The improvement combines a relaxed direct estimate with
an amplified first moment of the reflected numerator,
obtained from the existing fourth-moment estimate.
A two-stage parameter argument yields the stated boundary
without strengthening the inverse-moment estimate.
\end{abstract}

\begin{center}
\begin{minipage}{0.9\textwidth}
\small
\noindent\emph{2020 Mathematics Subject Classification.}
Primary 11M41; Secondary 11M26, 11R42, 11L05.

\smallskip
\noindent\emph{Key words and phrases.} quasi-Riemann hypothesis,
zero-free half-planes, Hecke $L$-functions,
Dirichlet $L$-functions, amplified moments, cubic Gauss sums.
\end{minipage}
\end{center}

\clearpage
\tableofcontents
\clearpage

\section{Introduction}\label{sec:introduction}

\subsection{Background}

The Riemann hypothesis asserts that \(\zeta(s)\ne0\)
for \(\Rea s>1/2\). The quasi-Riemann hypothesis asks
only for some fixed \(\sigma<1\) such that \(\zeta(s)\)
has no zeros in \(\Rea s>\sigma\).
Classical zero-free regions of de la Vall\'ee Poussin
and Vinogradov--Korobov approach the line \(\Rea s=1\)
as the height grows; see \cite{Titchmarsh}.
Zero-density estimates, including the recent work of
Guth and Maynard \cite{GuthMaynard}, bound the number
of zeros to the right of a given line, but do not by
themselves exclude every zero in a fixed half-plane.

Moment estimates provide a different route to zero-free
statements. Bettin and Gonek \cite{BettinGonek} show
that the \(\theta=\infty\) conjecture for mollified
second moments implies the Riemann hypothesis.
The method considered here also uses moments, but of
residue-character polynomials over an auxiliary family;
it does not assume that mollified-moment conjecture.

The preprint \cite{OAI26} develops a method based on
completed cubic-theta sums, finite compensation, and
moments of residue-character polynomials. It claims
the zero-free half-plane \(\Rea s>7/8\) for all
finite-order Hecke \(L\)-functions over
\(F=\mathbb Q(\sqrt{-3})\), and hence for all Dirichlet
\(L\)-functions.

The cubic-theta background originates in Patterson's
construction \cite{Patterson} and the work of
Heath-Brown and Patterson on Kummer sums
\cite{HBPatterson}. Dunn and Radziwi\l\l\
\cite{DunnRadziwill} develop level-aspect summation
formulas for cubic Gauss sums. The related large-sieve
theory includes Heath-Brown's cubic estimate
\cite{HBcubic}, the quadratic number-field inequality
of Goldmakher and Louvel \cite{GoldmakherLouvel}, and
the higher-order character estimates of Blomer,
Goldmakher and Louvel \cite{BGL}.
These works explain the analytic background; the
specific reflection and moment estimates used below
are imported from \cite{OAI26} with the conventions
of Section~\ref{sec:inputs}.

Liu \cite{Liu26} explains this method and improves its
parameter comparison. In particular, the stronger
algebraic boundary obtained there is
\begin{equation}\label{eq:liu-boundary}
 B_{\mathrm L}=\frac{1507-2\sqrt{921}}{1653}
              =0.874957069799\ldots .
\end{equation}
We change the high-side estimate itself: instead of
using only a pointwise bound for the reflected numerator,
we average it over the detected row class with the aid
of the same physical prime polynomials that enter the
compensated probe.

In a companion manuscript \cite{XuSextic}, we prove a matching
lower bound of scale \(A^{5/6}B^{1/3}\), up to arbitrarily
small powers, for the squared norm of the sextic residue-character
matrix with squarefree indices over the Eisenstein integers,
in the range \(B^{16/11+\delta}\le A\le B^{2-\delta}\),
\(0<\delta<3/11\). That argument evaluates an exceptional
sixth-power contribution to a Gauss-sum second moment and
shows that it survives the squarefree restrictions. It is
closely related to the Gauss--Gram analysis underlying the
direct estimate here, where a second-moment upper bound is
combined with reflected energy for the compensated probe.
The two results are complementary but logically separate:
the large-sieve lower bound does not use the \(7/8\) zero-free
conclusion, and the present refinement does not use that
lower bound. In particular, a lower bound for the operator
norm does not establish optimality of the zero-free boundary
obtained from this particular probe.

\subsection{Results}

\begin{theorem}\label{thm:main}
Assume the analytic inputs of \cite{OAI26} recorded in
Section~\ref{sec:inputs}, including the family-wide
\(7/8\) zero-free half-plane.
Every finite-order Hecke \(L\)-function over
\(F=\mathbb Q(\sqrt{-3})\) has no zeros in
\[
 \Rea s>\Bnew=\frac{8748867}{10000000}
             =\frac78-\frac{1133}{10000000}.
\]
The same holds for every Dirichlet \(L\)-function,
including \(\zeta(s)\). Poles at \(s=1\) for principal
characters are allowed.
\end{theorem}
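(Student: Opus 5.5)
The plan is to argue by contradiction inside the framework of \cite{OAI26}, with the conventions recorded in Section~\ref{sec:inputs}. Suppose some finite-order Hecke \(L\)-function \(L(s,\chi)\) over \(F\) has a zero \(\rho=\beta+i\gamma\) with \(\Bnew<\beta\). By the imported \(7/8\) zero-free half-plane we may assume \(\beta\le 7/8\), so \(\beta\in(\Bnew,7/8]\); this window has width only \(1133\cdot 10^{-7}\). As in \cite{OAI26}, the zero is detected by the compensated probe built from completed cubic-theta sums and the physical prime polynomials. Detection produces two competing estimates for the same quantity:
\begin{itemize}
\item a \emph{low-side} lower bound, forced by the zero at \(\rho\), of size roughly \(X^{\beta-\text{(loss)}}\);
\item a \emph{high-side} upper bound, from the direct Gauss--Gram second-moment estimate together with the reflected numerator.
\end{itemize}
The boundary \(7/8\) in \cite{OAI26}, and Liu's \(B_{\mathrm L}\) in \eqref{eq:liu-boundary}, arise from comparing exponents when the reflected numerator is bounded pointwise. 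The whole task is to replace that pointwise bound with an averaged one.

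The key steps, in order, are as follows.

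\begin{enumerate}
\item Relax the direct estimate. Keep a free parameter \(\eta\), measuring how far the probe length departs from the \(7/8\)-optimal choice, and record the resulting high-side exponent as an affine function of \(\eta\) and \(\beta\).
\item Bound the reflected numerator on average. Over the detected row class, weighted by the same prime polynomials as amplifier, apply Cauchy--Schwarz twice to reduce this first moment to the imported fourth-moment estimate. This gives a saving of a small power over the pointwise bound, valid when the amplifier length lies in a range set by the fourth-moment input.
\item Keep the inverse-moment estimate unchanged, as the abstract requires. The amplifier length and \(\eta\) must then be chosen so that the inverse-moment loss does not exceed the amplification gain.
\end{enumerate}

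Next comes the two-stage parameter argument. In the first stage, the existing boundary (\(7/8\), or Liu's \(B_{\mathrm L}\)) lets us assume \(\beta\) lies in a narrow window, and this pins down the admissible ranges of the auxiliary lengths. In the second stage, with \(\beta\) confined, we optimize \(\eta\) and the amplifier exponent in the resulting system of linear exponent inequalities. Contradiction holds exactly when \(\beta>\Bnew\), and the rational value \(8748867/10^7\) would be a safe truncation of the true optimum. I would verify the final inequalities at \(\beta=\Bnew\) with explicit rational parameters, checking strict positivity of every exponent margin.

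The transfer to Dirichlet \(L\)-functions, including \(\zeta\), is the one in \cite{OAI26}. We have \(L(s,\chi)L(s,\chi\omega)=L_F(s,\chi\circ\N)\), where \(\omega\) is the quadratic character attached to \(F\), so a zero of a Dirichlet \(L\)-function gives a zero of a Hecke \(L\)-function over \(F\). The pole at \(s=1\) is excluded because it is not a zero.

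I expect the main obstacle to be step 2. The reflected numerator must be averaged over a row class whose detection depends on the zero itself. The amplifier also has to be compatible with the compensation, so that the fourth moment applies uniformly in the conductor and in \(\gamma\). My first approach would be to majorize the indicator of the detected class by a positive combination of all rows, at the cost of a divisor-type factor \(X^\eps\).
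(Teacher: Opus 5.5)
\paragraph{The two-stage argument.} This is the main gap. The boundary $\Bnew$ is not obtained by confining $\beta$ to $(\Bnew,7/8]$ and then optimizing once. It comes from feeding a \emph{proved} zero-free bound back into the plain fourth moment. In \eqref{eq:plain-moment}, positive prime slots of total length $z$ fit only under $m_1+m_2+6\kappa z\le1$, and this capacity is licensed by $\beta_*\le(1+\kappa)/2$, which controls the logarithmic derivatives behind the slot prime sums. The imported $7/8$ licenses only $\kappa=3/4$, and the paper's admissible choice at $\kappa=3/4$ gives $B_0=69991/80000=0.8748875>\Bnew$. You must then use the new bound $\beta_*\le B_0$ to license $\kappa_1=2B_0-1=29991/40000$. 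This lies below the source range $3/4\le\kappa\le1$, so the margins of the source induction must be rechecked (Lemma~\ref{lem:kappa-extension}). The whole argument is then rerun with new $(b_1,\ell_1)$, as in \eqref{eq:bootstrap-order} and \eqref{eq:license}. The final decrease $B_0-\Bnew=8\cdot10^{-7}$ is exactly what $\kappa_1$ buys. In your scheme the second stage still has only $\beta\le7/8$ available, so it is really one stage with no mechanism for that gain. Liu's $B_{\mathrm L}$ would not help either, since $2B_{\mathrm L}-1>\kappa_1$. Nor is the final check a linear system at $\beta=\Bnew$. The endpoint $E_{b,\ell,\kappa}(\delta,x)\le-g$ must hold on the whole rectangle of row classes, through the rational exponent $R_\kappa(\delta,x)$, and the paper certifies it by exact identities quadratic in $\delta$.

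\paragraph{The contradiction itself.} As you set it up, it would not close. The paper compares two \emph{upper} bounds for one normalized probe.
\begin{itemize}
\item The direct bound $|J_\eta|\ll Z^{C_b(B(\ell))+\eps}$ comes from reflection and the additive Gram bound. Retaining the decay of rescaled prime tuples there is what permits $\ell>1/6$, hence $B(\ell)<7/8$.
\item The Poisson remainder satisfies $|J_\eta-f_\eta|\ll Z^{C_b(\beta_*)-\rho}$.
\end{itemize}
The averaged reflected numerator enters the second bound, on nonprincipal rows, not the Gram estimate. Those rows carry reciprocals $1/L^S(s,\eta\overline{\chi_\bullet(u)})$ of other family members, so the remainder is controlled only through the family supremum $\beta_*$. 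The two bounds together continue every $1/L_F^S(s,\eta)$ to $\Rea s>\beta_*-\epsilon_*$, with $\epsilon_*$ independent of $\eta$, contradicting the definition of $\beta_*$ (Lemma~\ref{lem:closure}). A single zero at $\beta\in(\Bnew,7/8]$ yields nothing unless it lies within $\epsilon_*$ of $\beta_*$. The target-uniform saving is therefore essential, and your plan does not provide it.

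\paragraph{The numerator average.} Cauchy--Schwarz twice only turns a fourth moment into a first moment. The saving also needs:
\begin{itemize}
\item the count $\#\mathcal B\ll U^{R_\kappa(\delta,x)}$ from the simultaneous witnesses and both moments (Proposition~\ref{prop:row-count});
\item the lower bound $|Q_{\rm sel}|^2\ge U^{2qz_p-\eps}$, valid on every row of the class;
\item the approximate functional equation with contour $\vartheta=qv$, which balances each block of length $U^m$ against its residual capacity $v(1-2m)$ (Proposition~\ref{prop:numerator}).
\end{itemize}
The classes are cut out by buffered bins of the auxiliary characters and by amplitude bins, not by the target's zero.
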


The first stage uses the \(7/8\) conclusion of
\cite{OAI26} to license its moment parameter.
The additional numerator estimate and the parameter
and contour extensions are proved below.

A smaller boundary gives a larger zero-free region.
For comparison, the proof passes through
\[
 \begin{array}{c|c|l}
 \text{boundary}&\text{decimal value}&\text{source}\\ \hline
 7/8&0.875&\cite{OAI26}\\
 B_{\mathrm L}&0.874957069799\ldots&\cite{Liu26}\\
 B_0=69991/80000&0.8748875&
       \text{Lemma~\ref{lem:closure} with }j=0\\
 \Bnew=8748867/10000000&0.8748867&
        \text{Theorem~\ref{thm:main}}.
 \end{array}
\]
The improvement over \eqref{eq:liu-boundary} is
\[
 B_{\mathrm L}-\Bnew=0.0000703697991687\ldots>0.
\]
For a rational verification of the strict comparison,
one may use \(\sqrt{921}<30348/1000\).

\subsection{Proof roadmap: one probe, two estimates}

The method of \cite{OAI26} compares two descriptions of
one arithmetic expression. Its direct, or low, estimate
comes from reflection and an additive Gram bound. Its
Poisson expansion contains a principal row whose
residue is an inverse Mellin transform of the
reciprocal of the target \(L\)-function. The remaining
rows must have a uniform power saving.

Here the principal row refers to the auxiliary Poisson
index \(u=1\), not to a restriction that the target
character be principal. Its residue contains the
reciprocal of the arbitrary target \(L\)-function.
The final continuation argument requires a saving
common to all targets, although constants and lower
thresholds may depend on each fixed target.

To make the comparison explicit, write \(\beta_*\) for the
supremum of real parts of zeros in the target family, defined
precisely in \eqref{eq:beta}. The total prime-length exponent
\(\ell\) determines the candidate boundary
\(B(\ell)=11/12-\ell/4\). A second geometric parameter
\(b>0\) determines the affine exponent
\(C_b(s)=s-(4+b)/6\), which translates a location in the
\(s\)-plane into a growth rate for the probe.
After scalar normalization, the arithmetic expression is
\(J_\eta(Z)\). Its principal Mellin term \(f_\eta(Z)\)
contains \(H_\eta(s)/L_F^S(s,\eta)\), with \(H_\eta\)
holomorphic and nonvanishing in the region of interest.
The two estimates to be proved are
\[
 \begin{aligned}
 |J_\eta(Z)|&\ll_{\eta,\eps}
                  Z^{C_b(B(\ell))+\eps}
                  &&\text{(direct estimate)},\\
 |J_\eta(Z)-f_\eta(Z)|&\ll_\eta
                  Z^{C_b(\beta_*)-\rho}
                  &&\text{(Poisson remainder)},
 \end{aligned}
\]
where the second holds under the contradiction hypothesis
\(\beta_*>B(\ell)\), with \(\rho>0\) common to all targets.
Since \(C_b\) has slope one, these bounds force
\(f_\eta\) to grow strictly more slowly than
\(Z^{C_b(\beta_*)}\). Mellin transformation then continues
\(1/L_F^S(s,\eta)\) to a larger half-plane. The factor
\(H_\eta\) cannot cancel a pole of this reciprocal.
The common saving contradicts the definition of \(\beta_*\).
This final implication is isolated in Lemma~\ref{lem:closure}.

The main work is therefore to bound the Poisson remainder.
For each class of nonprincipal rows we need both its size
and the average of its reflected numerator. The detector
and the inverse/plain moments give the row count; the new
amplified numerator estimate uses the prime amplitudes on
that same class. Section~\ref{sec:high} combines these two
outputs with the contour and scale factors. Its endpoint
inequality is then a numerical condition, certified separately
in Section~\ref{sec:parameters}.

\paragraph{The three refinements.}

There are three refinements in the present argument.
First, the low estimate retains the decay attached to
a rescaled prime tuple. This permits the total prime
length \(\ell\) to exceed \(1/6\), without a stronger
reflected-energy theorem
(Proposition~\ref{prop:low}).

Second, a fourth-moment estimate for plain
polynomials is applied to an approximate functional
equation for the numerator. Short polynomial blocks
leave room for more prime amplification in the moment;
long blocks instead benefit from stronger coefficient
decay. We choose the numerator contour so that these
two effects give the same saving for every block.
If the shortest blocks require more primes than are
available, their lengths are small enough for a direct bound
(Proposition~\ref{prop:numerator}).

Third, the moment parameter \(\kappa\) measures the cost
of prime amplification in the plain fourth moment.
A smaller value permits more amplification, but requires
the family bound \(\beta_*\le(1+\kappa)/2\).
The first improved half-plane permits a
smaller fixed value of \(\kappa\), using
Lemma~\ref{lem:kappa-extension} and the licensing
identity \eqref{eq:license}. The possibility
of such parameter feedback is already discussed in
\cite{Liu26}; the principal new analytic step here
is the averaged numerator estimate.

The logical order is
\begin{equation}\label{eq:bootstrap-order}
 \beta_*\le\frac78
 \ \Longrightarrow\ \kappa=\frac34
 \ \Longrightarrow\ \beta_*\le B_0
 \ \Longrightarrow\ \kappa=\frac{29991}{40000}
 \ \Longrightarrow\ \beta_*\le\Bnew.
\end{equation}
The second value of \(\kappa\) is used only after
the bound that licenses it has been proved.

\subsection{Organization}

The proof has three interfaces. Proposition~\ref{prop:low}
constructs the normalized probe and its principal term and
supplies the direct bound. Propositions~\ref{prop:row-count}
and \ref{prop:numerator} supply the two row estimates used
in the remainder bound of Proposition~\ref{prop:high}.
Finally, Proposition~\ref{prop:parameters} certifies the
numerical hypotheses, and Lemma~\ref{lem:closure} turns
each licensed parameter choice into a zero-free boundary.
Section~\ref{sec:inputs} records the imported hypotheses
needed by these statements.
The appendices contain the parameter-range extension,
the approximate-functional-equation uniformity,
the Euler and contour calculations, the exact
certificates, and the common order of choices.
Appendix~\ref{app:sources} collects the precise source
references. Readers familiar with the source probe
and its two moments may proceed to
Section~\ref{sec:numerator} for the main new estimate.

\section{Notation and analytic input}\label{sec:inputs}

Throughout the paper, results imported from \cite{OAI26}
are hypotheses of Theorem~\ref{thm:main} and its consequences.
We retain the source character presentations, coefficient
classes, and uniformity conventions. The extensions of
the moment range and contour regions required here are
proved in Appendices~\ref{app:kappa} and \ref{app:contours}.

The auxiliary family serves two purposes. Its zeros
determine where the denominator contour can be moved,
and its polynomial moments limit the number of rows
on which that contour must remain far to the right.
We first define this family, then state the two moments
and the detector that connects them to zeros.
The full source index is in Appendix~\ref{app:sources}.

\subsection{Characters, zeros, and physical rows}

Write \(\OO=\mathbb Z[\omega]\), where
\(\omega^2+\omega+1=0\), and let \(\N\) denote the ideal
norm, also used for the norm of a nonzero element.
Ideal sums use the primary-generator convention of
\cite{OAI26}. Put
\[
 \chi_n(u)=\left(\frac{u}{n}\right)_6,
\]
with value zero when \(u\) and \(n\) are not coprime.
Every character presentation retains its original
zero extension, even if its primitive inducing
character has a smaller defining modulus.

Let
\begin{equation}\label{eq:beta}
 \beta_*=\sup\left(\{1/2\}\cup
 \left\{\Rea\rho:
 \begin{array}{l}
 L_F(\rho,\eta)=0,\quad1/2\le\Rea\rho\le1,\\
 \eta\text{ primitive and of finite order}
 \end{array}\right\}\right).
\end{equation}
Poles are not counted. We use the result
\(\beta_*\le7/8\) of \cite{OAI26} to start
\eqref{eq:bootstrap-order}.
This is a family-wide parameter, not the zero boundary of
one fixed target. It both licenses the moment estimate and
provides the supremum contradicted in Lemma~\ref{lem:closure}.

For a finite set \(S\) of prime ideals,
\[
 L_F^S(s,\eta)=L_F(s,\eta)
              \prod_{p\in S}(1-\eta(p)(\N p)^{-s}).
\]
These extra factors are nonzero for \(\Rea s>0\).
The fixed set \(S\), finite ray data \(T\), and
normalization data are those of the source probe.
They may depend on a fixed target \(\eta\), but
not on the large parameter \(Z\).
The physical rows are nonzero sixth-power-free
elements \(u\) with \((u,S)=1\), including their
unit factors.
They are the indices that occur in the Poisson
expansion of the probe. The adjective ``physical''
distinguishes these indices from the auxiliary
character presentations used to estimate their terms.

Put \(\Theta=\langle\eta,\widehat T\rangle\); this finite
group collects the fixed twists introduced by the target
and the ray restrictions.
The detector uses the finite family of presentations
\[
 \mathcal X_u=
 \{\nu(n)\chi_n(u)^{\epsilon_\chi}:
       \nu\in\Theta,\ \epsilon_\chi\in\{-1,1\}\}.
\]
The inverse orientation also has value zero on
nonunits; it is not a reciprocal evaluated at zero.
If \(p\notin S\) divides a physical row with
\(v_p(u)=j\in\{1,\ldots,5\}\), its local character
has order \(6/\gcd(6,j)>1\).
No member of \(\Theta\), all of which are unramified
at \(p\), can cancel it.
Thus nonunit physical rows induce outside \(\Theta\).
This verifies the nonexceptional-character condition needed
for positive prime slots in the plain moment. The bounded
unit rows are treated separately.

\subsection{Polynomial conventions and the two moments}

The detector will produce two simultaneous witnesses on a
difficult row: an inverse polynomial and a plain polynomial.
Their moment bounds give complementary estimates for the
number of such rows in Section~\ref{sec:rows}. The plain
moment has a second use in Section~\ref{sec:numerator}:
it estimates the blocks of the physical numerator's
approximate functional equation. That numerator need not
have the detector's fixed twist; the information shared
between the two applications is the row class and its
physical prime amplitudes, not an identification of characters.

At base \(U\), define
\[
 \begin{split}
 M_\psi(r;W)&=U^{-r/2}
       \sum_n\mu(n)\psi(n)W(\N n/U^r),\\
 S_\psi(m;W)&=U^{-m/2}
       \sum_n\psi(n)W(\N n/U^m).
 \end{split}
\]
The normalization is by the square root of the
summation length. The inverse polynomial \(M_\psi\)
has M\"obius coefficients, as in the Dirichlet series
for \(1/L(s,\psi)\). The plain polynomial \(S_\psi\)
has no M\"obius factor and is the type of block that
appears in an approximate functional equation for
\(L(s,\psi)\). Products of short prime sums will
amplify the information supplied by these polynomials.

The weights are smooth annular functions. A pure norm
twist is recorded as a separate height parameter.
All common masks and fixed characters in a product
are retained. In particular, two plain factors in
one source moment have the same row character,
including its fixed finite-ray twist.

We use the following results of \cite{OAI26}.
Statement numbers refer to the version dated
September 30, 2026.
The marked inverse moment \cite[Lemma~17.1]{OAI26} gives
\begin{equation}\label{eq:marked-moment}
 \sum_{0<\N u\ll U}|M_{\psi_u}(r;W)Q_u|^2
       \ll U^{1+\eps}
\end{equation}
when the selected prime factors have total length \(z\)
and
\[
 r+2z<1,\qquad 2r+8z<3.
\]
Fixed positive margins are used when this estimate
is invoked. Its slot coefficients may be bounded
independently for each prime and slot, but are
independent of the row.
The sixth-power amplification \cite[Lemma~17.6]{OAI26} gives
\begin{equation}\label{eq:long-inverse}
 \sum_{\N u\asymp U}^{(6)}
       |M_{\psi_u}(r;W)|^2
 \ll U^{e(r)+\eps},\qquad
 e(r)=\max\{1,(1+5r)/6\}.
\end{equation}
The superscript \((6)\) denotes the physical
sixth-power-free rows.

The plain fourth moment \cite[Lemma~18.1]{OAI26}
gives
\begin{equation}\label{eq:plain-moment}
 \sum_u
 |S_{\psi_u}(m_1;W_1)S_{\psi_u}(m_2;W_2)Q_u|^2
       \ll U^{1+\eps}.
\end{equation}
With no prime slots, there is no restriction on the
bounded nonnegative lengths \(m_1,m_2\), and only
principal inducing characters are excluded.
With positive total slot length \(z\), the conditions are
\begin{equation}\label{eq:plain-capacity}
 m_1+m_2+6\kappa z\le1,\qquad
 \beta_*\le(1+\kappa)/2\quad(\kappa<1).
\end{equation}
The inducing row character must then lie outside
\(\Theta\). Positive-slot coefficients are fixed finite
linear combinations of characters in \(\Theta\);
arbitrary arithmetic coefficients are not allowed.
The underlying prime supports are disjoint before
the common masks and row zeros are imposed.
The source statement has \(3/4\le\kappa\le1\).
The first condition in \eqref{eq:plain-capacity} is a length
budget: the two polynomial blocks and the selected primes
must fit in one moment. The second condition licenses that
budget from a known family zero bound. Thus reducing
\(\kappa\) increases prime capacity, but can only be done
after proving the stronger bound that licenses it.

\begin{lemma}\label{lem:kappa-extension}
The plain moment \eqref{eq:plain-moment}, with all
the preceding coefficient and uniformity conditions,
continues to hold for
\[
 13/18\le\kappa\le1.
\]
Its positive-slot hypothesis
\(\beta_*\le(1+\kappa)/2\) is unchanged.
\end{lemma}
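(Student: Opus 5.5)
The plan is to re-run the proof of \cite[Lemma~18.1]{OAI26} and to track exactly where the lower bound \(\kappa\ge3/4\) enters. The source argument, as recorded in Appendix~\ref{app:sources}, opens the square in \eqref{eq:plain-moment}. It applies Poisson summation, or the cubic-theta reflection, in the row variable \(u\). This produces a diagonal term of size \(U^{1+\eps}\) and off-diagonal dual sums. The dual sums involve products of \(L\)-functions of characters in \(\Theta\) twisted by the slot primes, and they are estimated by shifting a contour to \(\Rea s=\beta_*+\eps\le(1+\kappa)/2+\eps\). The parameter \(\kappa\) therefore enters in two places: through the length budget \(m_1+m_2+6\kappa z\le1\), and through the location of this contour. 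I would write the off-diagonal contribution as \(U^{E(\kappa,m_1,m_2,z)+\eps}\), with \(E\) affine in each variable on the admissible polytope. The goal is then to show \(E\le1\) on the enlarged polytope \(13/18\le\kappa\le1\).

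\textbf{Step 1.} Isolate from the source proof every inequality that uses \(\kappa\). I expect three kinds.
\begin{itemize}
\item The contour loss, which is of the form \(U^{(\beta_*-1/2)\cdot(\text{dual length})}\).
\item The prime-slot cost \(6\kappa z\).
\item The auxiliary comparison that the source closes by invoking \(\kappa\ge3/4\), presumably in order to dominate a term like \((1+5r)/6\) or a dual length of order \(2-6z\).
\end{itemize}

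\textbf{Step 2.} Substitute the budget constraint \(6z\le(1-m_1-m_2)/\kappa\) and the licensing bound \(\beta_*\le(1+\kappa)/2\) into each of these estimates. This yields finitely many linear inequalities in \(m_1,m_2\), each of whose coefficients is rational in \(\kappa\).

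\textbf{Step 3.} Because the constraints are linear in \((m_1,m_2,z)\), it suffices to verify them at the vertices of the budget polytope. These vertices are \(z=0\), \(m_1=m_2=0\) with \(6\kappa z=1\), and the mixed vertices. At each vertex the resulting inequality in \(\kappa\) is an explicit rational condition, and I expect the binding one to reduce to \(18\kappa\ge13\). For example, a term of the shape \(\tfrac{1}{\kappa}\bigl(\tfrac{1+\kappa}{2}-\tfrac12\bigr)\cdot c+\dots\le1\) would give exactly such a threshold.

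\textbf{Step 4.} The remaining conditions must be checked to be \(\kappa\)-independent or monotone in the favourable direction. These are the coefficient hypotheses, the requirement that the inducing character lie outside \(\Theta\), disjoint prime supports, and the uniformity in the fixed twist. The unit and principal rows are unaffected, since for them \(z=0\).

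The main obstacle is Step 1. The source may use \(\kappa\ge3/4\) implicitly, for instance to ensure that some dual polynomial is short enough for a trivial bound, or to place the shifted contour to the right of a pole. In that case a single vertex inequality would not capture the loss. I would first try to locate the one estimate where the source's final exponent equals \(1\) exactly at \(\kappa=3/4\). I would then recompute it without that simplification and check that the extra slack is precisely what allows \(\kappa\) down to \(13/18\). If a second constraint turns out to bind earlier, I would split the slot primes between the two plain factors asymmetrically to rebalance the dual lengths.
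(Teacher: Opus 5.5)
\textbf{Where your plan stands.} Your overall plan is the paper's plan: rerun the proof of the source Lemma~18.1 and audit each occurrence of \(\kappa\), rather than apply that lemma outside its range. But the proposal stops before the audit, and the frame you set up for it does not match the argument being audited. The source moment is proved by a finite induction on an effective width \(M\), with \(A=m_1+m_2+z\) and affine premise \(A+(6\kappa-1)z\le M\). It has a width-floor terminal case, a centered range handled by a reflected comparison, Fourier children repaired by deleting slots, and a support-error allowance. It is not one diagonal/off-diagonal evaluation whose exponent \(E(\kappa,m_1,m_2,z)\) could be checked at the vertices of a budget polytope.

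What must be shown is that every reserve used to propagate this induction stays positive for smaller \(\kappa\). A vertex check of a single exponent cannot show that the children again satisfy the premise with the required margins. That is exactly the implicit use you flag as the main obstacle and leave unresolved. Two further points:
\begin{enumerate}
\item Your illustrative term is \(\kappa\)-independent, since \(\kappa^{-1}\bigl((1+\kappa)/2-1/2\bigr)=1/2\), so it produces no threshold.
\item The licensing hypothesis does not enter through a contour in dual sums. It enters only through the pointwise prime bound \(|Q|^2\ll Z^{\kappa z+\eps}(1+H)^{O(1)}\). This comes from the logarithmic-derivative bound on \(\Rea s=(1+\kappa)/2+e\), and every slot character is nonprincipal because the inducing character lies outside \(\Theta\); when \(\kappa=1\), absolute counting suffices.
\end{enumerate}

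\textbf{The missing margin checks.} The content you are missing is the following list.
\begin{enumerate}
\item From the premise and \(A\ge z\) one gets \(\kappa z\le M/6\), which is all the width-floor terminal exponent uses. One also gets \(z\le3M/13\), which replaces the old auxiliary bound \(2M/9\).
\item In the centered range \(A>5M/6\), the premise gives \((6\kappa-1)z<M/6\). The condition \(6\kappa-1\ge10/3\), which is precisely \(18\kappa\ge13\), then yields \(z<M/20\). Hence \(A-z>47M/60>2L\) for the unchanged comparison length \(L=M/4\). Together with \(A_{\rm comp}<23M/30+\xi\) and \(A_{\rm comp}+(6\kappa-1)z<14M/15+\xi\), both comparison boundaries keep their \(M/15\) reserve.
\item In the child repair, deleting a slot of length \(s\) lowers the affine defect \(F\) by \(6\kappa s\) at cost \(\kappa s\). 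So the greedy prefix costs at most \(\kappa\{F/(6\kappa)+\eta\}\le F/6+\eta\), independently of \(\kappa\).
\item Finally, \(0\le6\kappa-1\le5\) keeps the aggregate support error bounded by \(|\Delta M|+|\Delta A|+5|\Delta z|\).
\end{enumerate}
With these reserves verified, the same induction goes through: zero-slot bands first, then positive-slot bands, with width floor, step, allowance and mesh fixed before the slot count. Your fallback of redistributing slots asymmetrically between the plain factors is not needed, and it would change the statement being proved.
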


The proof is given in Appendix~\ref{app:kappa}.
It modifies the numerical bounds in the source
finite induction; it does not invoke the original
theorem outside its stated range.

\paragraph{Uniformity.}
In each moment, real lengths range over prescribed
bounded sets. The plain-moment slot mesh depends
on these sets and the requested power loss, not
on the eventual fixed slot count.
For that fixed count and fixed arithmetic data,
the estimates use finitely many smooth seminorms
and a fixed polynomial in the separated heights.
The source Sobolev argument permits rowwise smooth
parameters. It does not enlarge the arithmetic
coefficient class.
All applications here have effective row width one
and no additional moving fixed twist.
The amplitude \(q\) introduced below is not a
moving-support parameter.

\subsection{Buffered bins and simultaneous witnesses}

The detector groups rows according to how far their
denominator contours can be shifted. Each group has
a boundary \(a\) and a height interval; the extra
margin in the real part and height is the buffer
needed for subsequent contour moves. A row assigned
above the lowest group must have large polynomial
witnesses. The moment bounds will make such rows rare.

Fix \(U=Z^d\) in a bounded positive range of \(d\),
and a height \(T_1=Z^\tau\), with \(\tau\) chosen
late in the argument. The source buffered-bin
lemma associates to each nonunit physical row
an index \(i\) and a grid point \(a\ge51/100\).
Put
\[
 \delta=2a-1.
\]
Here \(a\) records how far left the denominator contour is
permitted to move on this class; \(\delta\) expresses the
same location in the normalization used by the witness
exponents. Neither is a new global candidate zero boundary.
There are \(O_e(1)\) pairs \((i,a)\), where \(e>0\)
is the fixed bin width. Every presentation in
\(\mathcal X_u\) satisfies
\begin{equation}\label{eq:buffered}
 |L^S(s,\psi)|+|L^S(s,\psi)^{-1}|
       \ll U^\eps
\end{equation}
for \(\Rea s\ge a+6e\) and the assigned buffered
heights. The parameter is exact in the sense
\[
 a\le\beta_*,\qquad \delta\le2\beta_*-1.
\]
At the floor \(a=51/100\), use the trivial row count.

Above the floor, \cite[Proposition~8.3]{OAI26}
gives, for every \(t\in[1,3/2]\), simultaneous
witnesses \(M_r,S_m\), with one common presentation
in \(\mathcal X_u\), such that, up to arbitrarily
small losses,
\begin{equation}\label{eq:witnesses}
 \begin{gathered}
 r\le t,\qquad r+m\ge t,\qquad 0\le m\le1/2,\\
 |M_r|^2\gg U^{\delta r-\eps},\qquad
 |S_m|^2\gg U^{\delta m-\eps}.
 \end{gathered}
\end{equation}
The precise dyadic endpoint errors are
\(O(1/\log U)\) or \(O(\eps)\).
Subdividing by presentation and dyadic lengths
costs only a logarithmic factor; the remaining
rowwise smooth parameters have the stated
polynomial height cost.
We keep the source's single cumulative allowance
for all added frequencies.

\section{The compensated probe and its low estimate}
\label{sec:probe}

We construct the expression \(J_\eta\) in the roadmap.
Its normalization must make the principal Mellin term
contain exactly \(H_\eta/L_F^S\), with \(H_\eta\)
nonvanishing; its other rows must contain the prime factors
used for amplification. Both features come from one finite
compensation operation on the source probe. We first define
that arithmetic operation as \(\mathscr I_\eta\), then
divide out its scalar principal factor to obtain \(J_\eta\).
The direct and Poisson estimates will use this same normalization.

\subsection{One physical expression}

Choose \(b>0\) and \(1/6\le\ell\le1/5\), and put
\begin{equation}\label{eq:geometry}
 l_x=\frac{1-b-\ell}{2},\qquad
 l_y=\frac{1+b-\ell}{2},\qquad
 h=\frac{1+b+3\ell}{2}.
\end{equation}
Then \(h=1-l_x+\ell=l_y+2\ell\).
The independent choices are \(b\) and \(\ell\): \(b\)
sets the imbalance \(l_y-l_x\), and \(\ell\) is the total
prime-length exponent. The derived exponents \(l_x,l_y\)
set the two probe scales, while \(h\) is the critical
Poisson-row scale, at which the high-side exponent will be tested.
We will require
\begin{equation}\label{eq:low-geometry}
 l_x>\ell,\qquad l_y-\ell>11b/6.
\end{equation}
Define
\[
 X=Z^{l_x},\qquad Y=Z^{l_y},\qquad
 C_b(s)=s-\frac{4+b}{6},\qquad
 B(\ell)=\frac{11}{12}-\frac{\ell}{4}.
\]
The map \(C_b\) converts a Mellin location into a power of
\(Z\). The candidate boundary \(B(\ell)\) is the location
whose power equals the direct estimate:
\begin{equation}\label{eq:signal-low}
 C_b(B(\ell))=\frac{l_x}{2}+\frac b{12}.
\end{equation}

Choose an even number \(K\) of physical prime
windows with lengths \(\ell_i>0\), total length
\(\sum_i\ell_i=\ell\), and scales \(P_i=Z^{\ell_i}\).
Each window supplies one prime in the tuple; we call
that position a prime slot. Thus a slot length is
the exponent of its prime scale, not a prime count.
Their weights \(W_i\) are nonnegative, nonzero,
smooth and annular. The underlying windows are
disjoint before imposing \(p\notin S\) and \(p\in1_T\).
The count \(K\), lengths and weights are fixed
before \(Z\).
The individual windows make whole-slot selection possible
in the two moment arguments. Their total length \(\ell\)
controls the boundary, whereas their sufficiently fine mesh
controls the loss when a fractional selection is rounded.

Compensation alternates between imposing divisibility and
rescaling the source probe. The subset index below specifies
which operation is taken at each prime; it is finite
bookkeeping, not another asymptotic parameter.
Let \(I_{\eta;D}(X,Y,Z)\) be the source completed
probe with \(1_{D\mid cn^3}\) imposed on its
completed index; \(I_{\eta;1}=I_\eta\).
For \(J\subseteq\{1,\ldots,K\}\), put
\(p_J=\prod_{i\in J}p_i\), with \(p_\varnothing=1\).
The physical compensated expression is
\begin{equation}\label{eq:physical-probe}
 \begin{split}
 \mathscr I_\eta(Z)
 ={}&\sum_{(p_i)\in\prod_i\mathcal P_i(Z)}
          \prod_iW_i(\N p_i/P_i)
       \sum_{J\subseteq\{1,\ldots,K\}}
       (-1)^{|J|}(\N p_J)^{-3/2}
                    \overline{\eta(p_{J^c})}\\
 &\hspace{16mm}\cdot
 I_{\eta;p_{J^c}}
    (X/\N p_J,Y/\N p_J,Z\N p_{J^c}).
 \end{split}
\end{equation}
This is the exact finite operation of \cite{OAI26}.
Its prime windows stay at their original scales
in every summand.
In particular, \(\mathscr I_\eta\) is defined
arithmetically, not by a high-side Mellin integral.

\subsection{Retaining the external decay}

Longer prime windows improve the candidate boundary
\(B(\ell)=11/12-\ell/4\), but they also enlarge a
completed-row norm in the direct estimate. The point
is to keep the decay from each rescaled term until
after that norm is estimated. This decay absorbs
the excess throughout \(\ell\le1/5\), as the final
inequality \eqref{eq:low-saving} will show.

The proposition records both the direct estimate and the
principal term needed for the later comparison. Its scalar
\(A_T(Z)\) removes the leading prime-window mass, at only
a logarithmic cost. The normalized probe \(J_\eta\) is
then the object estimated on both sides. The Euler correction
\(H_\eta\) remains inside the principal integral; its
nonvanishing ensures that a zero of the target \(L\)-function
cannot be canceled there. Finally, \(f_\eta\) denotes that
principal integral, whose growth will determine continuation
of the reciprocal in Lemma~\ref{lem:closure}.

\begin{proposition}[Normalized probe and low bound]\label{prop:low}
Assume the analytic inputs of Section~\ref{sec:inputs}.
Let \(b>0\), \(1/6\le\ell\le1/5\), and let
\(l_x=(1-b-\ell)/2\), \(l_y=(1+b-\ell)/2\),
\(h=(1+b+3\ell)/2\) satisfy
\(l_x>\ell\) and \(l_y-\ell>11b/6\).
For any fixed even system of disjoint prime windows
specified above, the arithmetic probe
\eqref{eq:physical-probe} satisfies
\[
 |\mathscr I_\eta(Z)|
       \ll_{\eta,\eps} Z^{l_x/2+b/12+\eps}.
\]
Define
\begin{equation}\label{eq:normalization}
 \begin{split}
 A_T(Z)&=(-1)^K Z^{-\ell/6}
      \prod_i\sum_{p\in\mathcal P_i(Z)}
                  W_i(\N p/P_i)(\N p)^{-5/6},\\
 J_\eta(Z)&=\mathscr I_\eta(Z)/(c_S A_T(Z)),
 \end{split}
\end{equation}
where \(c_S>0\) is the product of the source scalar
residues and fixed Mellin factors. Then
\(A_T(Z)\asymp(\log Z)^{-K}>0\) for all sufficiently
large real \(Z\), and
\begin{equation}\label{eq:normalized-low}
 |J_\eta(Z)|\ll_{\eta,\eps}Z^{C_b(B(\ell))+\eps},
 \qquad C_b(s)=s-(4+b)/6,\quad B(\ell)=11/12-\ell/4.
\end{equation}
The same probe has the absolutely convergent Poisson
representation \eqref{eq:high-identity}. Its principal
row determines a function \(H_\eta\), holomorphic on
\(\Rea s>87/100\), with \(|H_\eta(s)-1|\le1/2\)
after a fixed initial-prime cutoff. The associated
principal Mellin term is
\begin{equation}\label{eq:principal-signal}
 f_\eta(Z)=\frac1{2\pi i}\int_{(2)}
 Z^{C_b(s)}e^{(s-5/6)^2}
              \frac{H_\eta(s)}{L_F^S(s,\eta)}\,ds.
\end{equation}
All arithmetic data and windows are fixed before
\(Z\); the implied constants may depend on those
data and the fixed target \(\eta\).
\end{proposition}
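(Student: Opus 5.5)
The plan is to prove the direct bound summand by summand in \eqref{eq:physical-probe}, using the source reflection and additive Gram estimate for each completed probe \(I_{\eta;D}\). We keep the factor \((\N p_J)^{-3/2}\) and the shrunken scales \(X/\N p_J,\,Y/\N p_J\) until after the completed-row norm has been estimated. Afterwards we normalize by \(A_T(Z)\) and identify the principal row.

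\textbf{Step 1: the bound for one summand.} Fix \(J\), put \(D=p_{J^c}\), and write \(q=\N p_J\asymp Z^{\ell_J}\) with \(\ell_J=\sum_{i\in J}\ell_i\).
\begin{itemize}
\item Apply the source reflection to \(I_{\eta;D}(X/q,Y/q,ZD)\), with \(\N D\asymp Z^{\ell-\ell_J}\).
\item Apply Cauchy--Schwarz to the reflected expression. One factor is the Gauss--Gram second moment; the other is a completed-row norm whose length grows with \(\N D\).
\item In the source argument, where \(\ell\le1/6\), this norm is harmless. For \(\ell>1/6\) it produces an excess \(Z^{c(\ell-1/6)}\) for some explicit \(c\).
\end{itemize}
The retained external decay \(q^{-3/2}\), together with the shrinkage of \(X,Y\) by \(q\), gives a net saving of \(Z^{-\ell_J\cdot(\text{positive})}\). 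I expect this bookkeeping, namely \eqref{eq:low-saving}, to be the main obstacle. One must check that the excess from the norm is absorbed by the decay for every subset \(J\), including the worst case \(J=\varnothing\), in which no rescaling occurs. The way I would try to resolve it is as follows.
\begin{itemize}
\item For \(J=\varnothing\), the divisibility \(D\mid cn^3\) itself sparsifies the completed index. The additive Gram bound then gains \((\N D)^{-1}\) in the diagonal. This is where the condition \(l_y-\ell>11b/6\) should enter, by keeping the off-diagonal range admissible.
\item For general \(J\), interpolate between this case and the case \(J=\{1,\dots,K\}\).
\item The condition \(l_x>\ell\) guarantees that \(X/q\ge Z^{l_x-\ell}\to\infty\), so the source asymptotic regime persists.
\end{itemize}

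\textbf{Step 2: summing over primes and subsets.} Sum the resulting bound \(Z^{l_x/2+b/12+\eps}\) over the \(2^K\) subsets and over the prime tuples, weighted by \(\prod W_i\). The counting measure contributes \(\prod_i P_i\) against \(\prod_i P_i^{-3/2}\cdot\) (per-term size). After checking exponents, the total stays at \(Z^{l_x/2+b/12+\eps}\), which is the first claim.

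\textbf{Step 3: normalization and the low bound.} By the prime ideal theorem in the ray classes \(1_T\), each factor \(\sum_p W_i(\N p/P_i)(\N p)^{-5/6}\) has size \(\asymp P_i^{1/6}/\log P_i\). Hence \(A_T(Z)\asymp Z^{-\ell/6}Z^{\ell/6}(\log Z)^{-K}=(\log Z)^{-K}\), and its sign is \((-1)^K=1\) since \(K\) is even. Dividing the bound of Step 2 by \(c_S A_T(Z)\) loses only \((\log Z)^K\ll Z^{\eps}\). Together with \eqref{eq:signal-low}, this gives \eqref{eq:normalized-low}.

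\textbf{Step 4: the Poisson side and the principal term.} Apply the source Poisson summation to each \(I_{\eta;D}\); the sums converge absolutely because all prime windows are fixed and finite. In the principal row \(u=1\), the compensation \(\sum_J(-1)^{|J|}\cdots\) combines the local factors at each \(p_i\).
\begin{itemize}
\item The combined local factors produce the prime mass \((\N p_i)^{-5/6}\), which cancels against \(A_T\).
\item They also produce Euler factors \(1+O((\N p)^{1/2-2\Rea s})\). These are collected into \(H_\eta\).
\end{itemize}
The product defining \(H_\eta\) converges absolutely for \(\Rea s>87/100>3/4\). After removing the primes below a fixed cutoff, whose factors are nonzero and can be placed into \(S\) or \(c_S\), we obtain \(|H_\eta-1|\le1/2\). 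The Gaussian factor \(e^{(s-5/6)^2}\) and the power \(Z^{C_b(s)}\) come from the source Mellin weights and the chosen scales. This yields \eqref{eq:principal-signal}.
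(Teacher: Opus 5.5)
Your Steps~3 and~4 are essentially in line with the paper. The fixed-ray prime theorem gives \(A_T(Z)\asymp(\log Z)^{-K}>0\) with \((-1)^K=1\), and \(H_\eta\) comes from the source coefficientwise Poisson identity together with an initial-prime cutoff.

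The direct estimate in Steps~1--2, however, has a genuine gap, located where you placed the obstacle: you have misdiagnosed where the excess occurs. The completed-row norm excess is not uniform in \(J\), and \(J=\varnothing\) is not the worst case; it is the case with no excess at all. The missing step is to apply \cite[Lemma~14.3]{OAI26} at the actual rescaled scales.
With \(d=\sum_{i\in J}\ell_i\), the lengths are \(M'=1-\ell-2d\) and \(\ell'=\ell-d\), so \(M'+\ell'-1=-3d\). The two branches of that lemma are then at most \(M'\) and \(M'+(1+3\ell'-2M')/4=M'+(5\ell+d-1)/4\). Hence \(\sum_m|B_m^J|^2\ll Z^{M'+L_\ell(d)+\eps}\) with \(L_\ell(d)=(5\ell+d-1)_+/4\).
\begin{itemize}
\item Because \(\ell\le1/5\), \(L_\ell(0)=0\). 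This is exactly where the hypothesis \(\ell\le1/5\) enters, and your proposal never uses it.
\item For \(d>0\), the excess after Cauchy--Schwarz is only \((5\ell+d-1)_+/8\le d/8\).
\item Meanwhile the rescaled-prime count, the coefficient \((\N p_J)^{-3/2}\) and the factor \((X')^{1/2}\) contribute \(Z^{d-3d/2-d/2}=Z^{-d}\), so the net exponent is \(\le-7d/8\).
\end{itemize}
Your mechanism for \(J=\varnothing\), a diagonal gain of \((\N D)^{-1}\) in the Gram bound, is not supported by the imported Gram bound, which depends only on \(Q\), \(Y'\) and \(P_a\). It is also unnecessary. Interpolating between two subsets would not bound the others in any case, since each \(J\) gives a separate expression needing its own estimate.

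Two further points.

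First, the hypothesis \(l_y-\ell>11b/6\) plays a different role from the one you assign it. In \(\sum_m|A_m|^2\ll(Q/Y')(1+P_a^{1/6}+P_a^2/Y')Z^\eps\) one has \(Q\asymp X'Y'\) and \(P_a=Y'^2/Q\asymp Z^b\), independently of \(J\). The hypothesis then gives \(P_a^2/Y'\le P_a^{1/6}\) uniformly for \(0\le d\le\ell\).

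Second, your Step~2 accounting does not close. The decay \((\N p_J)^{-3/2}\) involves only the slots in \(J\). The slots in \(J^c\) are marked: they enter through \(1_{D\mid cn^3}\) inside \(B_m^J\), and must be summed inside the Cauchy--Schwarz step rather than as an external tuple count. Summing a per-tuple bound over all \(Z^{\ell+o(1)}\) tuples against \(\prod_iP_i^{-3/2}\), as you propose, misstates the coefficient for every \(J\ne\{1,\ldots,K\}\). For \(J=\varnothing\) it leaves a tuple count of size \(Z^{\ell+o(1)}\) with no decay to pay for it.
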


\begin{proof}
\emph{Direct estimate.}
For one rescaled subset \(J\), write
\[
 d_J=\sum_{i\in J}\ell_i=d,\quad
 X'\asymp XZ^{-d},\quad Y'\asymp YZ^{-d},\quad
 M'=1-\ell-2d,\quad \ell'=\ell-d.
\]
The source completed-row argument applies
\cite[Lemma~14.3]{OAI26} at the actual row and dual
scales. Before its special substitution
\(\ell=1/6\), its two upper branches are
\[
 M'-O/2-\Delta_H+o(1),\qquad
 M'+\frac{1+3\ell'-2M'-2\Delta_H}{4}+o(1).
\]
Here \(O\ge-o(1)\) is the powerful-row length,
and \(\Delta_H\ge-o(1)\) is the actual row deficit.
The fixed row parts have already been counted.
The structural relation used in the dual-length
bound is still
\[
 M'+\ell'-1=-3d.
\]
Thus the same argument gives the completed-row
norm
\[
 \sum_m|B_m^J|^2
 \ll Z^{M'+L_\ell(d)+\eps},\qquad
 L_\ell(d)=\frac{(5\ell+d-1)_+}{4}.
\]
The Gaussian annular decomposition and common
Fourier separation are unchanged. Its exact
annular offsets enter the prescribed small loss.

The quantitative additive Gram bound,
\cite[Proposition 15.2]{OAI26}, is already stated
for general polynomially bounded \(Q,Y'\) with
\(P_a=Y'^2/Q\ge1\). Here \(Q\asymp X'Y'\) and
\(P_a\asymp Z^b\). It gives
\[
 \sum_m|A_m|^2\ll
 \frac Q{Y'}
 \left(1+P_a^{1/6}+\frac{P_a^2}{Y'}\right)
 Z^\eps
\]
apart from its integrable polynomial height factor.
The second inequality in \eqref{eq:low-geometry}
absorbs \(P_a^2/Y'\) into \(P_a^{1/6}\),
uniformly for \(0\le d\le\ell\).
With the exact low-separation prefactor
\(Q^{-1/2}\), Cauchy--Schwarz yields the
per-tuple bound
\[
 (X')^{1/2}P_a^{1/12}
                     Z^{L_\ell(d)/2+\eps}.
\]
Relative to the unrescaled estimate, the tuple
count, compensation coefficient and \(X'\)
factor contribute \(d-3d/2-d/2=-d\).
Since \(\ell\le1/5\),
\begin{equation}\label{eq:low-saving}
 -d+\frac{(5\ell+d-1)_+}{8}\le-\frac{7d}{8}\le0.
\end{equation}
The marked slots are already inside \(B_m^J\);
they are not counted again.
All scales are positive powers of \(Z\) by
\eqref{eq:low-geometry}. Summing the fixed finite
collection of subsets proves the raw low bound.

\emph{Poisson representation and normalization.}

The source coefficientwise Poisson identity gives
\begin{equation}\label{eq:high-identity}
 \begin{split}
 \mathscr I_\eta(Z)
 =\frac1{(2\pi i)^3}\int_{(3)}\int_{(3)}\int_{(2)}
 &\mathcal W(X,Y,Z;s,w,z)
 \sum_u^{(6)}(\N u)^{-z}\overline{\xi(u)}\\
 &\cdot
 \frac{\zeta_F^S(6z)L^S(w,\chi_\bullet(u))}
 {L^S(s,\eta\overline{\chi_\bullet(u)})}
       \mathfrak H_{\eta,u,Z}(s,w,z)\,dz\,dw\,ds,
 \end{split}
\end{equation}
where
\[
 \mathcal W=X^{1/2-z}Z^{s+z-1}Y^{w-1}
               e^{(s+z-1)^2}M_0(z)\widehat W_1(w).
\]
The notation \(M_0(z)\) here denotes the source
radial Mellin test, not an inverse polynomial.
It is holomorphic for \(\Rea z>0\).
The identity is absolutely convergent on these
lines and concerns exactly \eqref{eq:physical-probe}.

The full correction is a finite sum of
holomorphic selected local factors and normally
convergent unselected products. It is not
defined by division by raw local factors.
Appendix~\ref{app:contours} verifies its required
regions and the changes below \(7/8\).

Only \(u=1\) has a principal numerator.
For this row put
\[
 H_\eta(s)=\mathcal H_{\eta,1}(s,1,1/6).
\]
A fixed initial-prime cutoff gives
\begin{equation}\label{eq:H-contraction}
 H_\eta\text{ holomorphic on }\Rea s>87/100,
 \qquad |H_\eta(s)-1|\le1/2.
\end{equation}
The scalar normalizer \eqref{eq:normalization}
is applied to both sides of \eqref{eq:high-identity}.
The source fixed-ray prime theorem gives
\[
 A_T(Z)\asymp(\log Z)^{-K}>0,\qquad
 A_T(Z)^{-1}\ll_\eps Z^\eps
\]
for sufficiently large real \(Z\).
Combining this with the raw low bound and
\eqref{eq:signal-low} proves \eqref{eq:normalized-low}.
The Gaussian makes \eqref{eq:principal-signal}
absolutely convergent on its defining line.
\end{proof}

\section{Prime amplitudes and the detected row count}
\label{sec:rows}

This section supplies the number of rows in each class
to the high-side comparison. A class is specified by its
row scale \(U=Z^d\), denominator boundary \(a\), and
prime amplitudes. A large amplitude enlarges the row's
term in the probe but strengthens the amplification used
to count such rows. We keep both effects visible by proving
a bound of the form \(\#\mathcal B\ll
U^{R_\kappa(\delta,x)+\eps}\), up to height factors.
Section~\ref{sec:numerator} uses this count as an input,
and Section~\ref{sec:high} combines it with the row weights.

\subsection{All physical amplitudes are retained}

On a row dyad \(\N u\asymp U=Z^d\), define
\[
 Q_i(u;z)=P_i^{-1/2}
  \sum_{p\in\mathcal P_i(Z)}
       \overline{\chi_p(u)}W_i(\N p/P_i)
                         (\N p/P_i)^{z-1}.
\]
The source pointwise prime estimate and the
buffered family \(\mathcal X_u\) permit amplitude
bins with
\[
 0\le g_i\le\delta/2,\qquad
 |Q_i|\le P_i^{g_i+\eps_{\rm bin}},\qquad
 g_i>0\Longrightarrow |Q_i|\ge P_i^{g_i}.
\]
The height and contour losses are assigned before
these bounds are used. Record \(g_i\) for every
physical slot, even when a later expansion term
uses the local error from that slot.
Put
\begin{equation}\label{eq:amplitude}
 q=\frac{\sum_i\ell_i g_i}{\ell},\qquad
 x=q/\delta,\qquad
 0\le q\le\delta/2,\quad0\le x\le1/2.
\end{equation}
Here \(q\) is the prime-length-weighted mean growth
exponent of the physical prime sums, while \(x\)
measures it relative to the detector parameter
\(\delta\). Both are fixed within an amplitude class.
At base \(U\), the available prime length is
\(\mathscr L=\ell/d\).
Thus \(b,\ell,\kappa\) are fixed for the whole probe,
whereas \(d,\delta,q,x\) describe one row class. The ratio
\(\mathscr L\) converts the physical prime supply from
base \(Z\) to the base \(U\) of the moments.

\subsection{The row-count exponent}

The exponent \(R_\kappa\) results from balancing the short
simultaneous-witness estimate with the long inverse-witness
estimate. It is the output needed later; the functions
\(D_\kappa,P_\kappa,\mathcal J_\kappa\) below are algebraic
abbreviations for that balance, not additional parameters.
Let \(\alpha=5/6\) and \(c=1/(3\kappa)\), and put
\begin{equation}\label{eq:row-functions}
 \begin{split}
 D_\kappa(x)&=3-(1+2c)x,\\
 P_\kappa(x)&=(2-2cx)(1-x),\\
 \mathcal J_\kappa(\delta,x)
       &=(\alpha-\delta)D_\kappa(x)+\delta P_\kappa(x),\\
 R_\kappa(\delta,x)
       &=1-\delta+
         \frac{(\alpha-\delta)\delta P_\kappa(x)}
                  {2\mathcal J_\kappa(\delta,x)}.
 \end{split}
\end{equation}

\begin{proposition}[Detected row count]\label{prop:row-count}
Assume the inverse and plain moments and simultaneous
witnesses of Section~\ref{sec:inputs}, including their
coefficient and uniformity conditions. Let
\(13/18\le\kappa\le1\), with
\(\beta_*\le(1+\kappa)/2\) when \(\kappa<1\).
On a dyad \(\N u\asymp U=Z^d\), let \(\mathcal B\)
be a nonunit physical amplitude class above the
detector floor, with \(\delta=2a-1\), \(x=q/\delta\),
and \(\mathscr L=\ell/d>1/5\) by a fixed amount.
Then
\[
 \#\mathcal B\ll U^{R_\kappa(\delta,x)+\eps}(1+T_1)^A,
 \qquad 0<\delta\le5/6,\quad0\le x\le1/2.
\]
On this rectangle the denominator in
\eqref{eq:row-functions} is positive, and
\begin{equation}\label{eq:coarse-count}
 \frac16\le R_\kappa\le1-\frac{2\delta}{3}\le1.
\end{equation}
For prescribed bounded length ranges and a requested
loss, the slot mesh is fixed before the slot count.
For each fixed slot system and arithmetic datum,
the seminorm and polynomial height orders are finite.
\end{proposition}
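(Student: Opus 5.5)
The count will come from two estimates for \(\#\mathcal B\), both obtained from the simultaneous witnesses \eqref{eq:witnesses}, and the exponent \(R_\kappa\) will arise as their balance after optimizing the free parameter \(t\in[1,3/2]\).

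\emph{Short estimate.} Fix \(t\). By \eqref{eq:witnesses}, after subdividing by presentation and dyadic lengths, each row of \(\mathcal B\) carries a witness \(M_r\) with \(r\le t\) and a witness \(S_m\) with \(0\le m\le 1/2\) and \(r+m\ge t\). We amplify \(S_m\) by a product \(Q_u\) of whole physical prime slots. These are the same slots carrying the amplitudes \(g_i\), so on \(\mathcal B\) we have \(|Q_u|^2\gg U^{2q z'/\mathscr L\cdot(\ldots)}\), where \(z'\) denotes the selected length at base \(U\).

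We then apply the plain moment \eqref{eq:plain-moment} with \(m_1=m\) and \(m_2=0\), or with \(m\) split into two blocks. This is licensed by Lemma~\ref{lem:kappa-extension} together with \(\beta_*\le(1+\kappa)/2\), and by the fact that nonunit physical rows induce outside \(\Theta\). The capacity condition \eqref{eq:plain-capacity} allows a slot length \(z\le(1-m)/(6\kappa)=c(1-m)/2\). The hypothesis \(\mathscr L>1/5\) by a fixed margin guarantees that this much prime length is physically available, up to mesh rounding. This gives \(\#\mathcal B\ll U^{1-\delta m-2qz+\eps}\).

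Combining this in the same way with the marked inverse moment \eqref{eq:marked-moment}, whose capacity \(r+2z<1\), \(2r+8z<3\) governs the slots attached to \(M_r\), yields a bound of the shape \(U^{1-\delta t\,\cdot\,(\text{linear in }x)}\). The linear factor is where \(D_\kappa(x)=3-(1+2c)x\) should appear, with \(c\) entering through the \(6\kappa\) cost of each slot.

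\emph{Long estimate.} The sixth-power amplification \eqref{eq:long-inverse} gives \(\#\mathcal B\ll U^{e(r)-\delta r+\eps}\). For \(r\) near \(t>1\) we have \(e(r)=(1+5r)/6\), so the exponent is \(1/6+(\alpha-\delta)r\) with \(\alpha=5/6\). The factor \(P_\kappa(x)=(2-2cx)(1-x)\) should appear after amplifying \(M_r\) by the remaining slots. There the amplitude \(x\) both enlarges \(|M_rQ_u|\) and shrinks the admissible length, which produces the product of the two linear factors.

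\emph{Balancing.} Since one estimate decreases in \(t\) and the other increases, we equate the two exponents and solve the resulting linear equation in \(t\). This should reproduce \(R_\kappa=1-\delta+(\alpha-\delta)\delta P_\kappa/(2\mathcal J_\kappa)\) with \(\mathcal J_\kappa=(\alpha-\delta)D_\kappa+\delta P_\kappa\). We must also check that the optimal \(t\) lies in \([1,3/2]\), and otherwise clamp it to an endpoint, which only weakens the bound.

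\emph{Elementary claims.} Positivity of \(\mathcal J_\kappa\) on the rectangle \(0<\delta\le5/6\), \(0\le x\le1/2\) follows from \(c\le 6/13\cdot\ldots\le1/2\) when \(\kappa\ge13/18\). Then \(D_\kappa\ge3-(1+2c)/2>0\) and \(P_\kappa\ge(2-c)/2>0\), and \(\alpha-\delta\ge0\).

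For the upper bound in \eqref{eq:coarse-count}, note that \(\frac{(\alpha-\delta)P}{2\mathcal J}\le\frac{(\alpha-\delta)P}{2(\alpha-\delta)D}\), which in the extreme case is at most \(1/3\). This requires \(P_\kappa\le\tfrac23D_\kappa\) on the rectangle. At \(x=0\) this reads \(2\le 2\), and the inequality should persist in \(x\) because \(P\) decreases faster than \(D\); a short derivative check will confirm this. Hence \(R_\kappa\le1-2\delta/3\).

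For the lower bound, \(R_\kappa\ge1-\delta\ge1/6\) since \(\delta\le5/6\).

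\emph{Uniformity.} The slot mesh, seminorms and height powers \((1+T_1)^A\) are inherited from the moment inputs. The mesh is fixed before the slot count, as in the uniformity paragraph of Section~\ref{sec:inputs}.

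\emph{Main obstacle.} The hardest step is the bookkeeping in the short estimate. One must select whole slots so that the selected length fits the capacity constraints of both moments simultaneously, and so that the selected amplitude equals \(q\) times that length. Since the \(g_i\) vary from slot to slot, a fixed selection only gives the weighted mean \(q\) approximately. My first attempt would be a greedy selection of the slots with the largest \(g_i\), which can only help. The rounding error from the fine mesh would be absorbed into \(\eps\).
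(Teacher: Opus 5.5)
Your elementary claims are fine: positivity of \(D_\kappa,P_\kappa,\mathcal J_\kappa\) from \(c\le6/13\), the inequality \(P_\kappa\le\tfrac23D_\kappa\) (the paper gets it at once from \(2D_\kappa-3P_\kappa=2x(2+c-3cx)\ge0\)), and the coarse bounds. Your greedy whole-slot selection is also the paper's. The gap is in the derivation of \(R_\kappa\) itself. It sits in the short-witness branch, which is where both \(D_\kappa\) and \(P_\kappa\) arise.

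The plain moment must be applied to two copies of the same witness, \(m_1=m_2=m\), i.e.\ to \(|S_m|^4|Q_{\rm sel}|^2\). The capacity is then \(z_P=(1-2m)/(6\kappa)\), and the class bound is \(A_P(m)=1-\delta\{cx+(2-2cx)m\}\). Your choice \(m_2=0\) fails. \(S_0\) is not a witness and has no lower bound on \(\mathcal B\). Even granting \(|S_0|\gg1\), it gives only \(1-\delta\{cx+(1-cx)m\}\), which is strictly weaker for \(m>0\), so no balancing can return the stated \(R_\kappa\).

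On the inverse side, the capacity \(z_M=(1-r)/2\) gives \(A_I(r)=1-\delta\{x+(1-x)r\}\). For fixed \(t\), use \(m\ge t-r\). Apply the inverse bound for \(r\ge r_*(t)\) and the plain bound for \(r\le r_*(t)\), where \(r_*(t)=((2-2cx)t+(c-1)x)/D_\kappa\). This gives the common value \(R_{\rm sh}(t)=1-\delta+\delta P_\kappa(3/2-t)/D_\kappa\). None of this is carried out in your proposal; it is left at ``should appear''.

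Your long branch also misplaces \(P_\kappa\). The estimate \eqref{eq:long-inverse} carries no prime slots, so there is no amplified form of it to invoke. The long exponent is simply \(R_{\rm long}(t)=1-\delta+(\alpha-\delta)(t-1)\), independent of \(x\). Rows with \(m=1/2\) give \(1-\delta\) from the zero-slot plain moment. Balancing \(R_{\rm sh}\) against \(R_{\rm long}\) gives \(t_*=1+\delta P_\kappa/(2\mathcal J_\kappa)\in[1,3/2]\) and exactly \(R_\kappa\).

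Finally, admissibility of the selections is not automatic, and your proposal omits it. With \(c\le6/13\) one checks \(3/5\le r_*(t)\le1\) and \(1/3\le t-r_*(t)\le1/2\). This has three consequences:
\begin{itemize}
\item The inverse selection has length at most \(1/5\). This, not the plain branch, is where \(\mathscr L>1/5\) is used.
\item The second inverse margin \(3-2r-8z_M=2r-1+8\nu_0\) stays positive. For \(r\le1/2\) the capacity \((1-r)/2\) would violate \(2r+8z<3\), so the location of the crossing matters.
\item The plain selection length is bounded by \(1/(18\kappa)\le1/13\).
\end{itemize}
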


\begin{proof}
\emph{Prime selection.}
For \(0\le z_p\le\mathscr L\), a subcollection
of whole positive-amplitude slots can be chosen
with total length at most \(z_p\) and
\[
 |Q_{\rm sel}|^2\ge U^{2qz_p-\eps}.
\]
The selection is fixed within the amplitude bin,
and its loss is controlled by the common slot mesh.
To see this, order the slots by decreasing \(g_i\).
A fractional initial segment of length \(z_p\)
has average amplitude at least the full weighted
mean \(q\). Discard its last fractional slot.
The lost gain is at most \((\delta/2)\) times
the largest relative slot length.
If the positive-amplitude slots are exhausted,
their full gain is \(q\mathscr L\ge qz_p\).
Taking the mesh sufficiently small proves the
claim, including the empty selection.

Relative to a witness character
\(\psi=\nu\overline{\chi_\bullet(u)}\), the
coefficient of a physical prime is
\[
 \overline{\nu(p)}1_{p\in1_T}
  =\frac1{|T|}\sum_{\theta\in\widehat T}
                         (\overline\nu\theta)(p).
\]
This is an allowed fixed combination from \(\Theta\).
When the positive orientation is needed in the
plain moment, conjugate the entire product,
including the profiles and all selected slots.
All absolute values, masks and underlying
supports are unchanged.

\emph{Combining the witnesses.}
Fix a detector parameter \(t\in[1,3/2]\) and
subdivide by the simultaneous witnesses in
\eqref{eq:witnesses}.

\emph{Short witnesses.}
For \(r<1\), the inverse capacity is
\(z_M=(1-r)/2\); for \(m<1/2\), the plain
capacity is \(z_P=(1-2m)/(6\kappa)\).
The selected moments give the respective exponents
\[
 \begin{split}
 A_I(r)&=1-\delta\{x+(1-x)r\},\\
 A_P(m)&=1-\delta\{cx+(2-2cx)m\}.
 \end{split}
\]
Use \(m\ge t-r-o(1)\). The two expressions cross at
\[
 r_*(t)=
 \frac{(2-2cx)t+(c-1)x}{D_\kappa(x)}.
\]
For \(c\le6/13\), direct multiplication by the
positive denominator gives
\[
 3/5\le r_*(t)\le1,\qquad
 1/3\le t-r_*(t)\le1/2.
\]
Use the inverse bound for \(r\ge r_*(t)\)
and the plain bound for \(r\le r_*(t)\).
Their common value is
\[
 R_{\rm sh}(t)=1-\delta+
       \frac{\delta P_\kappa(x)}{D_\kappa(x)}
                                      (3/2-t).
\]
This choice also verifies admissibility.
The inverse selection has length at most \(1/5\).
After decreasing it by \(\nu_0>0\), its first
strict margin is \(2\nu_0\), and its second is
\[
 3-2r-8z_M=2r-1+8\nu_0\ge1/5.
\]
The selected plain length is at most
\(1/(18\kappa)\le1/13\).
Near zero capacity use the appropriate unmarked
estimate; the resulting loss is arbitrarily small.

\emph{Long witnesses and the choice of detector.}
If \(r\ge1\), \eqref{eq:long-inverse} and \(r\le t+o(1)\)
give the exponent
\[
 R_{\rm long}(t)=1-\delta+(\alpha-\delta)(t-1).
\]
If \(m\ge1/2\), the zero-slot plain moment gives
\(1-\delta+o(1)\).
Balancing the remaining two branches at
\[
 t_*=1+
 \frac{\delta P_\kappa(x)}{2\mathcal J_\kappa(\delta,x)}
\]
gives \eqref{eq:row-functions}.
At \(\delta=\alpha\), use the unmarked endpoint
\(t=3/2\); vanishing-capacity neighborhoods are
handled as above. The finite presentation choices,
dyadic subdivisions and source Sobolev bounds
give the stated uniform loss.

\emph{Denominator and coarse bounds.}
Both \(D_\kappa\) and \(P_\kappa\) are positive
on the stated rectangle, so \(\mathcal J_\kappa>0\).
The lower bound follows from \(R_\kappa\ge1-\delta\).
The identity
\[
 2D_\kappa-3P_\kappa=2x(2+c-3cx)\ge0
\]
gives \(P_\kappa/D_\kappa\le2/3\).
Hence
\(R_\kappa\le1-\delta+\delta P_\kappa/(2D_\kappa)
\le1-2\delta/3\).
\end{proof}

\section{An amplified first moment for the numerator}
\label{sec:numerator}

The row count alone does not estimate the numerator
on those rows. We now use the prime amplitudes that
define a row class to improve its numerator average.
The relevant character is the physical numerator
\(\chi_\bullet(u)\), which may differ from the fixed
twist used to detect the row.

The choice of contour can be understood one dyadic
block at a time. The approximate functional equation
at \(1/2+\vartheta+it\) attaches a coefficient
\(D^{-\vartheta}=U^{-\vartheta m}\) to a centrally
normalized block of length \(D=U^m\).
Two copies of this block use length \(2m\) in the
plain fourth moment. For \(m\le1/2\), the remaining
capacity can therefore accommodate prime length
\(z_p=v(1-2m)\), with \(v\le1/(6\kappa)\).
The selected primes give a fourth-moment saving
\(U^{-2qz_p}\), hence a first-moment saving
\(U^{-qz_p/2}\) after H\"older.

Short blocks leave more room for prime amplification;
longer blocks have less room but stronger coefficient
decay. Choosing \(\vartheta=qv\) makes their combined
contribution independent of the block length:
\begin{equation}\label{eq:numerator-balance}
 -\vartheta m-\frac{qv(1-2m)}2
       =-\frac{\vartheta}{2}.
\end{equation}
For \(m\ge1/2\), coefficient decay alone gives this
saving. The parameter \(v\) is the prime-length capacity
reserved for a zero-length block; an actual block uses only
\(v(1-2m)\). Its two possible limits are the physical supply
\(\mathscr L\) and the moment capacity \(1/(6\kappa)\).
The following proposition first respects both limits. Its
second alternative uses full moment capacity even when
the physical supply is smaller: a direct bound handles
the shortest blocks, provided \eqref{eq:short-gate} holds.
The symbol \(R\) in the statement is an input row-count
exponent; the high-side application takes
\(R=R_\kappa(\delta,x)\) from Proposition~\ref{prop:row-count}.

\begin{proposition}[Amplified numerator average]\label{prop:numerator}
Let \(13/18\le\kappa\le1\), and assume the plain
moment \eqref{eq:plain-moment} with its stated
coefficient and uniformity conditions. When
\(\kappa<1\), assume \(\beta_*\le(1+\kappa)/2\).
Let \(\mathcal B\) be a class of nonunit physical
rows \(\N u\asymp U\), with fixed amplitude bins
as in \eqref{eq:amplitude}, total available prime
length \(\mathscr L\), and
\[
 \#\mathcal B\ll U^{R+\eps}(1+H)^A,
\]
where \(1\le H\le U^{1/100}\) bounds the numerator
and physical prime-polynomial heights, and
\(0\le q\le5/12\). Choose either
\begin{enumerate}
\item \(\vartheta=qv\), where
      \(0\le v\le\min\{\mathscr L,1/(6\kappa)\}\); or
\item \(\vartheta=q/(6\kappa)\), provided
\begin{equation}\label{eq:short-gate}
 R\le(6\kappa-2q)\mathscr L.
\end{equation}
\end{enumerate}
Then, uniformly for \(|t|\le H\),
\begin{equation}\label{eq:numerator-moment}
 \sum_{u\in\mathcal B}
 |L(1/2+\vartheta+it,\overline{\psi_u^*})|
 \ll U^{(3R+1)/4-\vartheta/2+\eps}(1+H)^{A'}.
\end{equation}
Here \(\psi_u^*\) induces the physical numerator.
For prescribed bounded length ranges and a requested
loss, the prime-slot mesh can be fixed before the
slot count. Once that count and the arithmetic data
are fixed, the estimate uses finitely many smooth
seminorms and a fixed polynomial height order.
It includes the primitive conductor profiles and
the restoration of the fixed excluded Euler factors.
\end{proposition}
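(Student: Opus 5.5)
We begin with a smoothed approximate functional equation for \(L(1/2+\vartheta+it,\overline{\psi_u^*})\) at the primitive conductor of \(\psi_u^*\), whose norm is \(\asymp U\) up to the fixed factors of \(S\) and \(T\). The fixed excluded Euler factors are restored as finite Dirichlet polynomials with bounded coefficients; they cost only \(U^\eps\). The uniformity appendix lets the cutoff depend smoothly on the row and the height \(t\), with a polynomial loss in \(H\). A smooth dyadic partition then writes the value as a sum of \(O(\log U)\) blocks \(D^{-\vartheta}S_{\psi_u}(m;W)\), where \(D=U^m\) and \(0\le m\le1/2+o(1)\). The dual side has the same shape after conjugating the root number, which has modulus one. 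Any block with \(m>1/2+\eps\) carries a rapidly decaying weight and is negligible. It therefore suffices to prove \eqref{eq:numerator-moment} for one block, with the left side replaced by \(U^{-\vartheta m}\sum_{u\in\mathcal B}|S_{\psi_u}(m;W)|\).

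\emph{Amplified moment for one block.} For \(m\le1/2\), we choose by the prime selection of Proposition~\ref{prop:row-count} a set of whole positive-amplitude slots of total length \(z_p\le v(1-2m)\) with \(|Q_{\rm sel}(u)|^2\ge U^{2qz_p-\eps}\) on \(\mathcal B\). In alternative 1 the constraints \(v\le\mathscr L\) and \(v\le1/(6\kappa)\) ensure that the physical supply suffices and that \(2m+6\kappa z_p\le1\). Hence \eqref{eq:plain-moment} applies with \(m_1=m_2=m\), with the same row character in both factors. After matching the slot coefficients to fixed combinations from \(\Theta\) as in Proposition~\ref{prop:row-count}, conjugating the whole product if necessary, it gives
\[
 \sum_{u\in\mathcal B}|S_{\psi_u}(m;W)|^4\ll U^{1-2qz_p+\eps}.
\]
Nonunit rows induce outside \(\Theta\), so the required nonexceptional condition holds. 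Hölder's inequality then bounds the block contribution by
\[
 (\#\mathcal B)^{3/4}U^{(1-2qz_p)/4}\ll U^{(3R+1)/4-qz_p/2+\eps}.
\]
Multiplying by \(U^{-\vartheta m}\) with \(\vartheta=qv\) and \(z_p=v(1-2m)\), identity \eqref{eq:numerator-balance} gives exactly the exponent \((3R+1)/4-\vartheta/2\). For blocks with \(m\) near \(1/2\), where the capacity vanishes, we use the zero-slot moment. That moment needs only a nonprincipal inducing character and gives the same bound with \(-\vartheta m\ge-\vartheta/2-o(1)\). The hypothesis \(q\le5/12\) keeps \(\vartheta\le1/2\) well inside the range where the approximate functional equation is uniform.

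\emph{Alternative 2.} Here \(\vartheta=q/(6\kappa)\), so the reserved capacity \(v=1/(6\kappa)\) may exceed \(\mathscr L\). For a block with \((1-2m)/(6\kappa)\le\mathscr L\), the argument above applies unchanged. The remaining short blocks have \(m<m_0:=(1-6\kappa\mathscr L)/2\). For these we use the trivial pointwise bound \(|S_{\psi_u}(m)|\ll U^{m/2}\), so the block contributes at most \(U^{R+m/2-\vartheta m+\eps}\). Since \(\vartheta\le1/2\) on this range, the exponent is increasing in \(m\), and it suffices to compare at \(m=m_0\). There the target exponent is \((3R+1)/4-\vartheta/2\). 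The required inequality reduces, after clearing the factor \(1-2\vartheta\) and substituting \(\vartheta=q/(6\kappa)\), to \(R\le(6\kappa-2q)\mathscr L\), which is precisely \eqref{eq:short-gate}. If the bookkeeping of constants differs slightly, this comparison is where we expect the gate to come from. If \(m_0\le0\), there are no short blocks.

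The main obstacle is the first step: making the approximate functional equation uniform over the row class. The conductor profiles, which are primitive and differ from \(u\) by fixed factors, and the restored Euler factors must be absorbed into rowwise smooth parameters. Only then does \eqref{eq:plain-moment} apply without enlarging the arithmetic coefficient class, and the Sobolev and height losses must stay \(U^\eps(1+H)^{A'}\). A second point needing care is that the block's character \(\psi_u\) differs from the detector presentation. The argument uses only the shared row class and the physical prime amplitudes \(Q_i\), which are defined through \(\overline{\chi_p(u)}\) and so relate to \(\psi_u\) by cubic or sextic reciprocity up to fixed twists in \(\Theta\). This has to be checked explicitly so that the selected amplifier is an admissible slot product for the numerator's own plain moment.
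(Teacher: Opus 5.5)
Your proposal is correct and is essentially the paper's proof: the approximate functional equation gives dyadic blocks with coefficient \(D^{-\vartheta}\), and each block is estimated by two copies of \(S_m\) in the plain fourth moment with a whole-slot selection of length \(v(1-2m)\), H\"older against \(\#\mathcal B\), the balance identity, and the zero-slot moment for \(m\ge1/2\); in the second alternative one splits at \(m_0=1/2-3\kappa\mathscr L\) and counts absolutely below it, where the exponent difference at \(m_0\) is exactly \((R-(6\kappa-2q)\mathscr L)/4\le0\). Three small corrections, none of which affects the argument: the primitive conductor is only \(\ll_S\N u\), dropping at primes with \(v_p(u)\ge2\), which is why the paper treats \(\mathcal Q_u\) as a rowwise Sobolev parameter and writes the dual coefficient as \(D^{-\vartheta}(D/\sqrt{\mathcal Q_u})^{2\vartheta}\); for \(|t|\) up to \(H\le U^{1/100}\) the blocks slightly beyond \(m=1/2\) are not negligible, but they are covered by the zero-slot moment since \(-\vartheta m\le-\vartheta/2\) there; and no reciprocity is needed for the slots, because \(Q_i\) is already built from \(\overline{\chi_p(u)}\), which is the numerator character evaluated at \(p\).
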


\begin{proof}
\emph{Supply-limited choice.}
Consider a centrally normalized plain block
\(S_m\) of length \(D=U^m\), with the same
character orientation as the physical primes.
For \(0\le m\le1/2\), request
\[
 z_p=v(1-2m).
\]
It fits both the supply and the moment:
\[
 z_p\le\mathscr L,\qquad 2m+6\kappa z_p\le1.
\]
The nonunit row induces outside \(\Theta\).
Apply \eqref{eq:plain-moment} with two copies
of \(S_m\). To select primes, order the slots by
decreasing positive amplitude and take an initial segment
of total length at most \(z_p\). Fractional selection
would give gain at least \(qz_p\); discarding the
last fractional slot loses only the prescribed mesh
error. If the positive-amplitude slots are exhausted,
their total gain is \(q\mathscr L\ge qz_p\).
Thus \(|Q_{\rm sel}|^2\ge U^{2qz_p-\eps}\).
Then
\[
 \sum_{u\in\mathcal B}|S_m(u)|^4
 \ll U^{1-2qz_p+\eps}(1+H)^{A_1},
\]
so H\"older gives
\begin{equation}\label{eq:block-first-moment}
 \sum_{u\in\mathcal B}|S_m(u)|
 \ll U^{(3R+1)/4-qz_p/2+\eps}(1+H)^{A_2}.
\end{equation}
For \(m\ge1/2\) in a fixed bounded range,
the zero-slot moment gives the same estimate
with \(z_p=0\).

The approximate functional equation in
Appendix~\ref{app:afe} expresses the numerator
as sums of such blocks. On each dyad its
main coefficient is \(D^{-\vartheta}\).
The dual coefficient can be written
\[
 \mathcal Q_u^{-\vartheta}D^\vartheta
 =D^{-\vartheta}(D/\sqrt{\mathcal Q_u})^{2\vartheta}.
\]
The last factor belongs to a uniformly controlled
profile. The fixed-prime restorations, whole-factor
conjugations, and rowwise conductor parameters
are justified in that appendix without changing
the permitted coefficient class.
For \(m\le1/2\), \eqref{eq:numerator-balance} gives
the exponent \(-\vartheta/2\).
For \(m\ge1/2\),
\(-\vartheta m\le-\vartheta/2\).
Sum the retained dyads and absorb their logarithmic
number and the arbitrarily small rounding losses.

\emph{Full-capacity choice.}
Only the case \(\mathscr L<1/(6\kappa)\) needs
attention. The moment capacity exceeds the available
prime length only for the shortest blocks. We split
at the length where the two quantities coincide and
use absolute counting below that threshold. Put
\[
 m_0=1/2-3\kappa\mathscr L,\qquad
 \mathcal K(R)=(3R+1)/4.
\]
For \(m_0\le m\le1/2\), the full capacity
\(z_p=(1-2m)/(6\kappa)\) fits the available
prime length, and the preceding proof applies.
For \(0\le m\le m_0\), absolute ideal counting
gives \(|S_m|\ll U^{m/2+\eps}(1+H)^C\).
After the dyadic coefficient, the first-moment
exponent is \(R+(1/2-\vartheta)m\).
Since \(\vartheta<1/2\), it is bounded at \(m_0\);
the exact difference is
\[
 R+(1/2-\vartheta)m_0
        -\{\mathcal K(R)-\vartheta/2\}
   =\frac{R-(6\kappa-2q)\mathscr L}{4}\le0.
\]
The dual profiles satisfy the same absolute bound.
All longer blocks were already covered.
\end{proof}

\begin{remark}
The same prime amplitudes can be used in both the
row count and the numerator moment. They are lower
bounds that hold on every row of the chosen class;
applying two estimates to that class does not insert
two additional prime products into the probe.
\end{remark}

The saving \(U^{-\vartheta/2}\) in
\eqref{eq:numerator-moment} is not yet the net saving in
the probe. Reflection costs \(U^\vartheta\), and the
outside Mellin weight contributes \(Y^{-\vartheta}\).
Relative to the contour \(\vartheta=0\), their product is
\[
 U^{-\vartheta/2}U^\vartheta Y^{-\vartheta}
       =Z^{\vartheta(d/2-l_y)}.
\]
The next section therefore imposes \(h<2l_y\): at the
critical row scale \(d=h\), the averaged numerator then
improves the complete high-side estimate, not just one factor.

\section{The new high-side comparison}\label{sec:high}

We combine the row count and numerator moment with
the Poisson expansion. Moving the numerator variable
left of \(1/2\) reflects its \(L\)-value to the line
\(1/2+\vartheta\), where the preceding estimate
applies. Reflection costs a conductor power, so the
numerator saving must be combined with this cost and
the outside Mellin factors before deciding whether
the new contour improves the bound.

\subsection{From the row estimates to an endpoint condition}

Use \(l_x,l_y,h,B(\ell)\) from Proposition~\ref{prop:low}
and \(R_\kappa\) from Proposition~\ref{prop:row-count}.
Put \(c=1/(3\kappa)\), \(z_0=17/50\),
and \(d_{\min}=1/100\). The fixed line \(\Rea z=z_0\)
is used for intermediate rows; rows below \(Z^{d_{\min}}\)
will instead be estimated absolutely. For a row class with
\(U=Z^d\), \(a=(1+\delta)/2\), and \(q=\delta x\), write
\[
 \mathcal K=\frac{3R_\kappa(\delta,x)+1}{4},\qquad
 \vartheta=
 \begin{cases}
 q\min\{\ell/d,1/(6\kappa)\},&0<\delta<1/8,\\
 q/(6\kappa),&1/8\le\delta\le5/6.
 \end{cases}
\]
Here \(\mathcal K\) is the first-moment exponent obtained
from the row count by H\"older. The contour displacement
\(\vartheta\) uses the two alternatives in
Proposition~\ref{prop:numerator}: a supply-limited choice
for small \(\delta\), and full moment capacity otherwise.
The numerical conditions below will ensure that both choices
are licensed on every intermediate row class.

Before listing those conditions, we record the exponent they
are designed to control. Relative to the direct-bound exponent
\(C_b(B(\ell))\), the contributions are
\[
\begin{array}{c|l}
\text{source}&\text{exponent of }Z\\ \hline
\text{outside Mellin scales}&
 a-B(\ell)-l_y(1/2+\vartheta)+h(z_0-1/6)\\
\text{centered prime factors and amplitudes}&-\ell/2+\ell q\\
\text{numerator average and reflection}&d(\mathcal K+\vartheta/2)\\
\text{Poisson-row weight}&-dz_0.
\end{array}
\]
Their sum defines the total relative exponent
\begin{equation}\label{eq:full-exponent}
 \begin{split}
 \mathcal E(d;\delta,x)
 ={}&a-B(\ell)-l_y(1/2+\vartheta)
             +h(z_0-1/6)-\ell/2+\ell q\\
       &+d(\mathcal K+\vartheta/2-z_0).
 \end{split}
\end{equation}
Sections~\ref{subsec:high-local} and \ref{subsec:high-exponent}
justify this assembly for the full main/error expansion,
including the contour moves and moment hypotheses.
Lemma~\ref{lem:frequency-monotonicity} then shows that
\(\mathcal E\) increases with \(d\), with slope at most
\(2\). Consequently, a bound at \(d=h\) controls all
intermediate scales up to \(h+\zeta\), at a cost of at
most \(2\zeta\). For \(\delta\ge1/8\), that endpoint is
\begin{equation}\label{eq:endpoint}
 \begin{split}
 E_{b,\ell,\kappa}(\delta,x)
 ={}&C_0+\delta/2
 +\delta x\left\{\ell+
       c\left(\frac h4-\frac{l_y}{2}\right)\right\}
 -\frac{3h}{4}(1-R_\kappa(\delta,x)),\\
 C_0={}&-\frac14+\frac b6+\frac{5\ell}{4}.
 \end{split}
\end{equation}
Thus \(E_{b,\ell,\kappa}\) is not an independent input:
it is \(\mathcal E(h;\delta,x)\) in the full-capacity
range. The constant \(C_0\) collects its terms independent
of the row class. Small \(\delta\) has a supply-limited
endpoint, evaluated in Section~\ref{subsec:high-endpoint}.

\subsection{Numerical conditions and the remainder bound}

\paragraph{Admissible parameters.}
The parameters \(b,\ell,\kappa\) determine the probe geometry
and the moment capacity. The remaining three numbers only
record margins: \(g\) is the endpoint saving, \(\zeta\)
extends the retained row range beyond \(h\), and \(g_*\)
is the saving reserved after that extension, before the small
analytic losses. They do not change the arithmetic probe.
Call \((b,\ell,\kappa;g,\zeta,g_*)\) admissible if
the following explicit conditions hold. These are
numerical conditions, not analytic assumptions:
\begin{enumerate}
\item \emph{Direct estimate and contour region.}
      \(b>0\), \(1/6\le\ell\le1/5\),
      \(29991/40000\le\kappa\le3/4\),
      \(B(\ell)>87/100\),
      \(l_x>\ell\), and \(l_y-\ell>11b/6\).
\item \emph{Prime supply and numerator amplification.}
      \(6\kappa\ell>3/4\),
      \(\ell/h<1/(6\kappa)\),
      \(\ell/(h+\zeta)>21/100\),
      and \(h/2<l_y<1/2\).
\item \emph{Intermediate-row saving.}
      \(0<g_*<1/1000\), \(0<2\zeta<1/1000\),
      \(g-2\zeta>g_*\), and
      \(E_{b,\ell,\kappa}(\delta,x)\le-g\)
      for \(1/8\le\delta\le5/6\), \(0\le x\le1/2\).
\item \emph{The remaining row ranges.} With
      \(A_{\rm sm}=1/2+(\ell/2)(1/2+2\ell/h)-h/2\),
      the remaining scalar bounds are
\begin{equation}\label{eq:admissible-ranges}
 \begin{gathered}
 \max\{C_0,C_0+A_{\rm sm}/8\}<-1/250,\\
 C_0+(1/2+3\ell/2)/50<-1/100,\\
 h(z_0-1/6)-l_y/2+2d_{\min}<-1/20.
 \end{gathered}
\end{equation}
      These three inequalities control, respectively, the
      small-\(\delta\) classes above the detector floor,
      the floor itself, and the small rows (including the
      bounded nonprincipal unit rows).
\end{enumerate}

These conditions certify that the row estimates fit together.
They do not supply the family bound
\(\beta_*\le(1+\kappa)/2\), which remains a separate
analytic hypothesis of the following proposition and is
licensed in order in the final two-stage argument.

\begin{proposition}[Uniform high-side remainder]\label{prop:high}
Assume the analytic inputs of Section~\ref{sec:inputs}.
Let \((b,\ell,\kappa;g,\zeta,g_*)\) be admissible,
and suppose \(\beta_*\le(1+\kappa)/2\).
If \(\Delta=\beta_*-B(\ell)>0\), there exist a fixed
even prime-slot system and \(\rho>0\), common to all
targets, such that the probe and principal term of
Proposition~\ref{prop:low} satisfy
\begin{equation}\label{eq:high-remainder}
 |J_\eta(Z)-f_\eta(Z)|
       \ll_\eta Z^{C_b(\beta_*)-\rho}
\end{equation}
for every fixed finite-order target \(\eta\) and all
sufficiently large real \(Z\). Constants and thresholds
may depend on \(\eta\), but \(\rho\) does not.
The same slot system retains the low bound of
Proposition~\ref{prop:low}; it is fixed before \(Z\).
\end{proposition}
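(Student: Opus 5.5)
We start from the Poisson representation \eqref{eq:high-identity}, normalized by \(c_SA_T(Z)\) as in Proposition~\ref{prop:low}. The row \(u=1\) is isolated first. Moving its \(w\)-contour across the pole of \(\zeta_F^S\)-type at \(w=1\) and its \(z\)-contour to \(\Rea z=1/6\) produces exactly \(f_\eta(Z)\) in \eqref{eq:principal-signal}. The remaining pieces of the \(u=1\) row are placed on lines where the factor \(H_\eta\) and the correction \(\mathfrak H\) are controlled by Appendix~\ref{app:contours}. The \(s\)-line is taken at \(\Rea s=\beta_*+\) (small), which is legitimate because \(1/L_F^S(s,\eta)\) is holomorphic there by the definition of \(\beta_*\). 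These pieces then contribute \(Z^{C_b(\beta_*)-\rho}\) once the Gaussian and Mellin factors are bounded.

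For the nonprincipal rows, we decompose dyadically in \(\N u\asymp U=Z^d\) and into buffered bins \((i,a)\) and amplitude classes \((g_i)\). The fixed slot system is a fine even mesh of windows of total length \(\ell\), chosen before \(Z\) so that the whole-slot rounding losses in Propositions~\ref{prop:row-count} and \ref{prop:numerator} are below \(g_*/10\). On a class \(\mathcal B\) with parameters \((d,\delta,x)\), we proceed in five steps:
\begin{enumerate}
\item Move the denominator \(s\)-contour to \(\Rea s=a+6e\), where \eqref{eq:buffered} bounds \(1/L^S\) by \(U^\eps\).
\item Move \(z\) to \(\Rea z=z_0\).
\item Move \(w\) left to \(\Rea w=1/2-\vartheta\).
\item Reflect the numerator via its functional equation to \(1/2+\vartheta\), at cost \(\mathcal Q_u^{\vartheta}\asymp U^\vartheta\).
\item Expand the prime factors inside \(\mathfrak H_{\eta,u,Z}\) as \(\prod_iQ_i(u;z)\), bounded by \(P_i^{g_i}\) and centred by \(-\ell/2\).
\end{enumerate}
Summing over \(u\in\mathcal B\), we bound the numerator by Proposition~\ref{prop:numerator}, with \(R=R_\kappa(\delta,x)\) supplied by Proposition~\ref{prop:row-count}. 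The hypotheses hold because \(\beta_*\le(1+\kappa)/2\) and because condition~2 of admissibility gives \(\ell/d>21/100>1/5\) up to \(d\le h+\zeta\). The gate \eqref{eq:short-gate} is checked for \(\delta\ge1/8\) using \(6\kappa\ell>3/4\) and \eqref{eq:coarse-count}. Collecting the powers reproduces \eqref{eq:full-exponent}, so the contribution of \(\mathcal B\) is \(\ll Z^{C_b(B(\ell))+\mathcal E(d;\delta,x)+\eps}\).

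It then suffices to show \(\mathcal E\le-g_*\) uniformly over classes; since \(a\le\beta_*\) and \(\Delta>0\), this gives the claim with \(\rho=\min(g_*,\Delta)/2\). We split the rows into ranges as follows.
\begin{itemize}
\item \emph{Full-capacity classes, \(d\le h+\zeta\).} Lemma~\ref{lem:frequency-monotonicity} (slope at most \(2\)) reduces to the endpoint \(d=h\). There \(\mathcal E=E_{b,\ell,\kappa}\le-g\) by condition~3, leaving \(-g+2\zeta<-g_*\).
\item \emph{Small-\(\delta\) classes.} Here the supply-limited \(\vartheta\) is used, and the endpoint is controlled by the first inequality of \eqref{eq:admissible-ranges}.
\item \emph{The detector floor.} We use the trivial count and the second inequality of \eqref{eq:admissible-ranges}.
\item \emph{Small and unit rows, \(d<d_{\min}\).} These are bounded absolutely on \(\Rea z=z_0\), using the third inequality of \eqref{eq:admissible-ranges}.
\item \emph{Rows \(d>h+\zeta\).} We push \(\Rea z\) far right, so the factor \((\N u)^{-z}\) against \(X^{-z}Z^{z}\) gives arbitrary power saving, because \(h\) is the critical scale where \(h=1-l_x+\ell\).
\end{itemize}
Finally we sum the \(O(\log Z)\) dyads and the \(O(1)\) bins and amplitude classes. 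The height factors \((1+T_1)^A\) and \((1+H)^{A'}\) are absorbed by the Gaussian \(e^{(s+z-1)^2}\) and by the decay of \(\widehat W_1\), with \(T_1=Z^\tau\) and \(\tau\) small.

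The main obstacle is the bookkeeping in the contour-shift and expansion step. We must show that every term of the main/error expansion of \(\mathfrak H_{\eta,u,Z}\), including the terms where a slot contributes a local error rather than \(Q_i\), is dominated by the same exponent \(\mathcal E\). We must also check that the contour moves stay inside the holomorphy regions of Appendix~\ref{app:contours} uniformly in \(u\). I would first treat the pure product term, then argue that each error term replaces some \(Q_i\) by a factor no larger than \(P_i^{g_i}\) while the recorded bins \(g_i\) are kept for the moment estimates, as in Section~\ref{sec:rows}.
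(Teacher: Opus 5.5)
Your architecture is the paper's: the same split into full-capacity classes (reduced to $d=h$ by Lemma~\ref{lem:frequency-monotonicity}), small-$\delta$ classes, the floor, small rows, distant rows and the principal row, closed by the same admissibility inequalities. Two steps, however, fail as written.

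\emph{The principal row and your $\rho$.} The double residue at $(w,z)=(1,1/6)$ is not exactly $c_SA_T(Z)f_\eta(Z)$. (You must cross these poles, to $\Rea w=19/20$ and $\Rea z=33/200$ inside the region $\mathcal D_2'$ extended below $7/8$; reaching $\Rea z=1/6$ is not enough.) At $u=1$ each selected slot factor is $\mathcal B_p=-1+O((\N p)^{-87/100})$. Hence $\mathfrak H_{\eta,1,Z}(s,1,1/6)=H_\eta(s)Z^{\ell/6}A_T(Z)(1+\mathcal R_{\eta,Z}(s))$, where $\mathcal R_{\eta,Z}\ll Z^{-\mu}$ only for $\mu<(87/100)\min_i\ell_i$. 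So the principal row saves only $\min\{l_y/20,h/600,\mu\}-O(e)$ relative to $C_b(\beta_*)$ (Lemma~\ref{lem:principal-remainder}). Neither $\Delta$ nor $g_*$ helps there, because the target's own reciprocal keeps $\Rea s$ at $\beta_*+e$.

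Your $\rho=\min(g_*,\Delta)/2$ is therefore unjustified. A mesh fine enough to keep rounding losses below $g_*/10$ makes $\min_i\ell_i$, hence $\mu$, at most comparable to $g_*$. The paper takes $\rho=2\gamma$ with $\gamma=\frac1{100}\min\{\Delta,g_*,l_y/20,h/600,\mu\}$. This forces an order of choices: slot system, then $\mu$, then the bin width $e$ and remaining losses, then the per-target seminorm orders, then $\tau$, then the tail order $N$.

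Relatedly, $(1+T_1)^A$ is absorbed by taking $\tau$ small, not by the Gaussian. Test-function decay instead controls the height tails. Those tails must stay on the original lines, because \eqref{eq:buffered} holds only at buffered heights; so the heights are truncated before $s$ is lowered.

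\emph{The error terms you flag as the main obstacle.} Your plan is to bound each error slot by the main factor with $g_i=0$ and pay a flat reflection cost $U^\vartheta$. This fails at ramified labels. If $p\mid u$ with $v_p(u)=j\ge2$, the strict local error on $\Rea w=1/2-\vartheta$ has exponent $-\Rea w=\vartheta-1/2$. It therefore exceeds $P_i^{z_0-1/2}$ by $(\N p)^{\vartheta}$, which the certified margins cannot absorb.

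The missing idea is the simultaneous conductor-deficit bound of Lemma~\ref{lem:raised-errors}, namely $\N\mathfrak f_u\ll_S\N u\prod_{p\in J_0}(\N p)^{-(j_p-1)}$. Pricing the reflection at $(\N\mathfrak f_u)^\vartheta$ then gives $-\Rea w-(j_p-1)\vartheta=-1/2-(j_p-2)\vartheta\le-1/2$ at every label at once. Bounding the conductor by $U$ discards exactly this saving.

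Finally, $Q_i(u;z)$ depends on $z$. The amplitude partition and the $w$-shift must therefore be made pointwise in $z$, after $z$ sits on $\Rea z=z_0$, with no later $z$-move. Fixing amplitude classes before moving $z$, as you describe, is not consistent.
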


The rest of this section proves the proposition.
The local contour calculations used in its proof
are supplied in Appendix~\ref{app:contours}.

\subsection{Local errors with the reflected value retained}
\label{subsec:high-local}

Write \(s\) for the denominator variable and put
\(z_0=17/50\). On a detected class take
\begin{equation}\label{eq:raised-lines}
 \Rea s=a+16e,\qquad
 \Rea w=1/2-\vartheta,\qquad \Rea z=z_0,
 \qquad 0\le\vartheta\le\min\{a-1/2,1/10\}.
\end{equation}
The main factor of slot \(i\) is
\(-P_i^{z-1/2}Q_i(u;z)\); denote its local error
by \(\mathcal D_i(u)\).

\begin{lemma}\label{lem:raised-errors}
For every subset \(I\) of error slots, on
\eqref{eq:raised-lines},
\[
 \left|L^S(w,\chi_\bullet(u))
                 \prod_{i\in I}\mathcal D_i(u)\right|
 \ll U^{\vartheta+\eps}(1+T_1)^C
 |L(1-w,\overline{\psi_u^*})|
                 \prod_{i\in I}P_i^{z_0-1/2+\eps}.
\]
All conductor-deficit factors are used simultaneously.
\end{lemma}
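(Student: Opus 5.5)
The plan is to treat the two factors separately and then multiply. The numerator factor uses the functional equation; each local error factor \(\mathcal D_i(u)\) gets a pointwise bound on \(\Rea z=z_0\). Since \(\Rea w=1/2-\vartheta<1/2\), I would first write \(L^S(w,\chi_\bullet(u))\) in terms of the primitive character \(\psi_u^*\) inducing \(\chi_\bullet(u)\). This gives the finite Euler product over primes dividing \(u\) or lying in \(S\) where the presentation differs from \(\psi_u^*\), times \(L(w,\psi_u^*)\). Because the row is sixth-power-free and \(S\) is fixed, these correction factors at \(\Rea w\ge 2/5\) are bounded by \(\prod_{p\mid u}(1+(\N p)^{-2/5})\ll U^{\eps}\). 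Applying the functional equation to \(L(w,\psi_u^*)\) then gives
\[
 |L(w,\psi_u^*)|\asymp \mathcal Q_u^{1/2-\Rea w}\,|\gamma\text{-ratio}|\,|L(1-w,\overline{\psi_u^*})|.
\]
Here the conductor \(\mathcal Q_u\ll U^{1+o(1)}\) (up to fixed data), so \(\mathcal Q_u^{\vartheta}\ll U^{\vartheta+\eps}\). By Stirling the gamma ratio is \(\ll(1+|\Im w|)^{C}\), and the heights of \(w\) are confined to \(\ll T_1\) after the Gaussian/Mellin truncations. This produces the factor \(U^{\vartheta+\eps}(1+T_1)^C|L(1-w,\overline{\psi_u^*})|\).

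For the error slots, I would recall that the slot-\(i\) local factor in \(\mathfrak H_{\eta,u,Z}\) is the prime sum over \(\mathcal P_i(Z)\) of a holomorphic local Euler expression. Its main part is \(-P_i^{z-1/2}Q_i(u;z)\), and \(\mathcal D_i\) is the remainder. That remainder consists of higher-order terms in \((\N p)^{-s}\), \((\N p)^{-w}\), \((\N p)^{-6z}\), together with the conductor-deficit terms at primes \(p\mid u\) (where \(\chi_p(u)=0\) or the numerator character's local factor changes). On \(\Rea z=z_0\), \(\Rea s\ge a+16e>1/2\) and \(\Rea w\ge 2/5\), each such term is bounded trivially by summing absolute values over \(p\asymp P_i\). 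The higher-order terms save at least one extra factor of \((\N p)^{-\Rea w}\) or \((\N p)^{-\Rea s}\) relative to the main term, giving \(\ll P_i^{z_0-1/2+\eps}\) up to heights. The deficit terms involve only \(p\mid u\), of which there are \(O(\log U)\) in each window, so they are even smaller.

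The point flagged in the statement, that all conductor-deficit factors are used simultaneously, is where I expect the main obstacle. The deficit primes \(p\mid u\) in different error slots also alter the Euler factors of \(L^S(w,\chi_\bullet(u))\) relative to \(L(w,\psi_u^*)\). One must check that these are absorbed consistently, with no double counting and no product of \(O(\omega(u))\) unbounded factors. My approach is to write the full product over \(p\mid u\) of (restoration factor)\(\times\)(deficit contribution) as a single finite Euler product. I would then bound it once by \(\prod_{p\mid u}(1+O((\N p)^{-2/5}))\ll\exp(O(\omega(u)))\ll U^{\eps}\), using disjointness of the prime windows so that each \(p\) appears in at most one slot. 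Since \(|I|\le K\) is fixed, multiplying the per-slot bounds and the numerator bound gives the lemma. The constant \(C\) depends only on \(K\) and the fixed data, via Appendix~\ref{app:contours} for the holomorphy of the selected local factors in these regions.
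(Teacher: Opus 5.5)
There is a genuine gap, located exactly at the point the statement flags. You pay the full reflection cost \(\mathcal Q_u^{\vartheta}\ll U^{\vartheta+\eps}\) from the crude bound \(\mathcal Q_u\ll U\). You then assert that the local errors at primes \(p\mid u\) are ``even smaller'' than the generic ones, and this is false when \(j=v_p(u)\ge2\). At such \(p\) the main local coefficient \(\overline{\chi_p(u)}\) vanishes. The exact local formula leaves, besides terms of size below \((\N p)^{z_0-1/2}\), one further term of size \((\N p)^{z_0-\Rea w}=(\N p)^{z_0-1/2+\vartheta}\). A single such prime in window \(i\) already exceeds the target \(P_i^{z_0-1/2+\eps}\) by the fixed power \(P_i^{\vartheta}\). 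Since the lemma is pointwise in \(u\), the observation that few primes of the window divide \(u\) does not help. Your heuristic that error terms ``save an extra \((\N p)^{-\Rea w}\) relative to the main term'' has nothing to compare against at \(p\mid u\), where the main term is zero.

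The missing idea is that the functional equation costs \((\N\mathfrak f_u)^{\vartheta}\), not \((\N u)^{\vartheta}\), and that the numerator's conductor exponent at such \(p\) is \(1\), not \(j_p\). Let \(J_0\) be the set of these ramified labels selected in one error tuple. Then \(\N\mathfrak f_u\ll_S\N u\prod_{p\in J_0}(\N p)^{-(j_p-1)}\), so each label carries
\[
(\N p)^{z_0-1/2+\vartheta-(j_p-1)\vartheta}=(\N p)^{z_0-1/2-(j_p-2)\vartheta}\le(\N p)^{z_0-1/2}.
\]
This is what ``all conductor-deficit factors are used simultaneously'' means: one conductor bound absorbs every label of the tuple at once. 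It is legitimate because the windows are disjoint, so the labels are distinct primes, and summing over the divisor-many possible label sets costs only \(U^\eps\). Your reading of the deficit as Euler restoration factors at \(p\mid u\) misses this. In fact, at good \(p\mid u\) both the presentation and \(\psi_u^*\) vanish, so only the fixed primes of \(S\) need restoring.

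A smaller point concerns the other ramified exponents, for example \(1-2\Rea s-\Rea w\) at \(j=3\) and \(3/2-3\Rea s-\Rea w\) at \(j=4\). To push these below \(-1/2\) you need \(\Rea w\ge1-a\), which is the constraint \(\vartheta\le a-1/2\) in \eqref{eq:raised-lines}. The bound \(\Rea w\ge2/5\) alone does not suffice when \(a\) is near \(51/100\).
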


The proof from the exact local formulas is in
Appendix~\ref{app:contours}. Unlike the old
pointwise specialization, this estimate leaves the
reflected \(L\)-value to be averaged.

\subsection{The exponent at an arbitrary row scale}
\label{subsec:high-exponent}

We now verify the moment hypotheses for the proposed
\(\vartheta\) and derive \eqref{eq:full-exponent}.
For \(d_{\min}\le d\le h+\zeta\), admissibility
gives \(\mathscr L=\ell/d>21/100\).
Since \(\kappa\ge29991/40000\), for \(\delta\ge1/8\)
Proposition~\ref{prop:row-count} gives
\begin{equation}\label{eq:uniform-short-gate}
 \begin{split}
 (6\kappa-2q)\mathscr L-R_\kappa
 &\ge\frac{21}{100}(6\kappa-\delta)
                           -(1-2\delta/3)\\
 &\ge\frac{10799}{6000000}>\frac1{600}.
 \end{split}
\end{equation}
Thus the two alternatives in Proposition~\ref{prop:numerator}
apply in their respective ranges.
Moreover \(\vartheta\le q/(6\kappa)\le5/52<1/10\)
and \(\vartheta\le a-1/2\).

The numerator moment contributes
\(U^{\mathcal K-\vartheta/2}\), while the functional
equation and local errors cost \(U^\vartheta\).
Their product is \(U^{\mathcal K+\vartheta/2}\),
up to the small losses and height factors.
The outside factor \(Y^{w-1}\) supplies a further
\(Y^{-\vartheta}\). Thus the net contour-dependent
factor is \(Z^{\vartheta(d/2-l_y)}\), which decays
at the critical scale \(d=h\) because \(h<2l_y\)
by admissibility.

Combining the averaged numerator and
Lemma~\ref{lem:raised-errors}, the full finite
main/error expansion has the common row bound
\[
 U^{\mathcal K+\vartheta/2+\eps}(1+T_1)^A
              Z^{\ell(z_0-1/2)+\ell q}.
\]
Indeed an error term has main amplitude only
\(\sum_{i\notin I}\ell_i g_i\le\ell q\),
whereas every physical amplitude remains
available in estimating that row class.

Multiplying by the outside Mellin factors and the row
weight \((\N u)^{-z_0}\) gives exactly
\eqref{eq:full-exponent}, relative to \(C_b(B(\ell))\).
This verifies the exponent accounting at the start of the
section for every term, not just the product of main factors.
Appendix~\ref{app:contours} justifies the
amplitude-dependent \(w\)-line before the local
main/error bounds are applied.

\begin{lemma}\label{lem:frequency-monotonicity}
For every admissible parameter tuple,
with \(\delta,x\) fixed, \(\mathcal E\) is continuous
and increasing in \(d\), with one-sided slopes
between \(7/200\) and \(2\).
\end{lemma}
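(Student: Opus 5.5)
The plan is to read off the $d$-dependence of \eqref{eq:full-exponent} directly. With $\delta,x$ fixed, $a=(1+\delta)/2$, $q=\delta x$ and $R_\kappa(\delta,x)$ do not depend on $d$. Hence $\mathcal K=(3R_\kappa+1)/4$ is a constant, and $d$ enters $\mathcal E$ in only two ways: through the linear term $d(\mathcal K+\vartheta/2-z_0)$, and through $\vartheta$ itself, in $-l_y\vartheta$ and in $d\vartheta/2$. The first input is the coarse count \eqref{eq:coarse-count}, $1/6\le R_\kappa\le1$. It gives
\[
 \frac38\le\mathcal K\le1,\qquad
 \mathcal K-z_0\ge\frac38-\frac{17}{50}=\frac7{200}.
\]
This is exactly the claimed lower slope, so the lemma reduces to showing that the extra $d$-dependence through $\vartheta$ only increases the slope, and only by a small amount.

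\emph{Full-capacity range $1/8\le\delta\le5/6$.} Here $\vartheta=q/(6\kappa)$ does not depend on $d$. So $\mathcal E$ is affine in $d$ with slope $\mathcal K+\vartheta/2-z_0$. The lower bound $7/200$ holds because $\vartheta\ge0$. For the upper bound, $q\le5/12$ and $\kappa\ge29991/40000$ give $\vartheta\le5/52$, as already noted in Section~\ref{subsec:high-exponent}. Therefore the slope is at most $1+5/104-17/50<2$.

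\emph{Supply-limited range $0<\delta<1/8$.} Here $\vartheta=q\min\{\ell/d,1/(6\kappa)\}$, which is continuous in $d$ with a single breakpoint at $d=6\kappa\ell$. Hence $\mathcal E$ is continuous and piecewise smooth, and it suffices to bound the derivative on each piece.
\begin{itemize}
\item For $d\le6\kappa\ell$, $\vartheta$ is constant and the previous argument applies verbatim.
\item For $d>6\kappa\ell$, substitute $\vartheta=q\ell/d$. The $d$-dependent part of $\mathcal E$ becomes
\[
 -\frac{l_yq\ell}{d}+d(\mathcal K-z_0)+\frac{q\ell}{2},
\]
since $d\vartheta/2=q\ell/2$ is constant. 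Its derivative is $\mathcal K-z_0+l_yq\ell/d^2$.
\end{itemize}
On the second piece the added term $l_yq\ell/d^2$ is nonnegative, which preserves the lower bound $7/200$. For the upper bound, use $l_y<1/2$, $q<\delta/2<1/16$, $\ell/d<1/(6\kappa)\le2/9$, and $d>6\kappa\ell\ge3/4$ (from $\kappa\ge29991/40000$ and $\ell\ge1/6$). Together these make $l_yq\ell/d^2$ smaller than $10^{-2}$, so the slope stays below $1+1/100-17/50<2$. The one-sided slopes at the breakpoint are the two limits just computed, so both lie in $[7/200,2]$.

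No step is a genuine obstacle. The only point needing care is the supply-limited branch, which is the one place where $\vartheta$ varies with $d$; there one must check that the nonlinear term $-l_yq\ell/d$ contributes a positive, bounded amount to the slope. That check uses only $d>6\kappa\ell$ on that branch and the bound $q\le\delta/2$ from \eqref{eq:amplitude}. Everything else follows from \eqref{eq:coarse-count} and the numerical range of $\kappa$ in the admissibility conditions, uniformly over the retained range $d_{\min}\le d\le h+\zeta$.
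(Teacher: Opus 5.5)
Your proposal is correct and is essentially the paper's proof: it uses the same lower bound $\mathcal K-z_0\ge 7/200$ from $R_\kappa\ge 1/6$, the same two one-sided derivatives $\mathcal K-z_0+q/(12\kappa)$ and $\mathcal K-z_0+q\ell l_y/d^2$, and the same breakpoint $d=6\kappa\ell$. Two numerical slips do not affect the conclusion: $6\kappa\ell>3/4$ does not follow from $\kappa\ge 29991/40000$ and $\ell\ge 1/6$ (their product gives only $29991/40000$), but it is itself an admissibility condition, and $1/(6\kappa)\le 2/9$ fails for $\kappa_1<3/4$, although $1/(6\kappa)<0.223$ still keeps $l_y q\ell/d^2<10^{-2}$.
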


\begin{proof}
By Proposition~\ref{prop:row-count},
\(\mathcal K-z_0\ge7/200\).
Where \(\vartheta=q/(6\kappa)\), the derivative is
\[
 \mathcal K-z_0+\frac{q}{12\kappa}.
\]
Where \(\vartheta=q\ell/d\), it is
\[
 \mathcal K-z_0+\frac{q\ell l_y}{d^2}.
\]
The two choices of \(\vartheta\) coincide at
\(d=6\kappa\ell\), so \(\mathcal E\) is continuous
there; its one-sided derivatives need not agree.
The second regime has \(d\ge6\kappa\ell>3/4\)
by admissibility.
Using \(R_\kappa\le1\), \(q\le5/12\),
\(\ell\le1/5\), and \(l_y<1/2\) bounds both
derivatives above by \(2\).
\end{proof}

\subsection{The endpoint}
\label{subsec:high-endpoint}

Monotonicity reduces the intermediate row scales
to their upper endpoint. We first evaluate at
\(d=h\); the small extension beyond \(h\) will
be absorbed by the strict margins in the certificates.
A negative endpoint exponent means that these rows
are smaller than the low bound by a fixed power of \(Z\).

For \(\delta\ge1/8\), evaluation of
\eqref{eq:full-exponent} at \(d=h\) gives exactly
the endpoint function \eqref{eq:endpoint}.
For \(0<\delta<1/8\), admissibility gives
\(\ell/h<1/(6\kappa)\), and the endpoint instead is
\begin{equation}\label{eq:small-delta-endpoint}
 E_{\rm small}(\delta,x)
 =C_0+\delta/2+
       \ell\delta x(1/2+2\ell/h)
       -\frac{3h}{4}(1-R_\kappa).
\end{equation}
The two expressions differ only in how much prime
length is used for the numerator. For small \(\delta\)
we cap the selection by the available supply; for
larger \(\delta\) the short-block bound permits the
full moment capacity.

\subsection{Proof of the main estimate}

\begin{proof}[Proof of Proposition~\ref{prop:high}]
The licensing hypothesis and Lemma~\ref{lem:kappa-extension}
make the plain moment available. Propositions~\ref{prop:row-count}
and \ref{prop:numerator} therefore apply with the
amplitude-dependent contours just established.

For intermediate rows with \(\delta\ge1/8\),
admissibility and Lemma~\ref{lem:frequency-monotonicity}
give \(\mathcal E\le-g+2\zeta<-g_*\).
For \(0<\delta<1/8\), the coarse bound in
Proposition~\ref{prop:row-count} gives
\(E_{\rm small}\le C_0+A_{\rm sm}\delta\).
The first inequality of \eqref{eq:admissible-ranges}
and the extension cost \(2\zeta<1/1000\) leave
a saving greater than \(g_*\).
At the detector floor the original pointwise contour
has endpoint \(C_0+(1/2+3\ell/2)/50\) and slope
less than \(2\), so the second inequality does the same.

For rows below \(Z^{d_{\min}}\), including the
nonprincipal unit rows, the absolute numerator bound
and full correction give exponent at most
\(C_b(\beta_*)+h(z_0-1/6)-l_y/2+2d_{\min}\).
The last inequality of \eqref{eq:admissible-ranges}
gives a saving of \(1/20\). Above \(Z^{h+\zeta}\),
the absolute \(z\)-line can be chosen sufficiently
far right to give any prescribed saving. The
underlying small-row and distant-row estimates are
recorded in Appendix~\ref{app:ranges}.

For the principal row, Lemma~\ref{lem:principal-remainder}
gives the three positive margins \(l_y/20\),
\(h/600\), and \(\mu\), where
\(0<\mu<(87/100)\min_i\ell_i\).
These complete the estimates for every term of the
Poisson representation in Proposition~\ref{prop:low}.

Choose the moment losses and common slot mesh first,
then a fixed even slot system fine enough for that
mesh. Set
\[
 \mu=\frac{87}{200}\min_i\ell_i,\qquad
 \gamma=\frac1{100}
       \min\{\Delta,g_*,l_y/20,h/600,\mu\}.
\]
Choose the detector width and remaining real losses
within these margins. For each target, fix the
required seminorm and height orders before choosing
the height exponent \(\tau>0\) small enough to absorb
their polynomial cost. Choose the external tail order
last. Appendix~\ref{app:choices} records this order
in detail. It yields \eqref{eq:high-remainder} with
\(\rho=2\gamma>0\), common to all targets.
The low bound holds for every fixed slot system,
so it holds for this same choice.
\end{proof}

\section{Two certified parameter choices}\label{sec:parameters}

This section supplies the numerical input to
Proposition~\ref{prop:high}. Its conclusion asserts
admissibility only; the family zero bound needed to
license each moment is supplied in the next section.

\begin{proposition}[Parameter certificate]\label{prop:parameters}
The two triples
\begin{equation}\label{eq:stage-zero}
 \kappa_0=\frac34,\qquad
 b_0=\frac{173}{2000},\qquad
 \ell_0=\frac{10027}{60000}.
\end{equation}
and
\begin{equation}\label{eq:stage-one}
 \kappa_1=\frac{29991}{40000},\qquad
 b_1=\frac{17307}{200000},\qquad
 \ell_1=\frac{1253399}{7500000}.
\end{equation}
are admissible in the sense of Section~\ref{sec:high}
with the respective margins
\[
 \begin{array}{c|ccc}
 j&g_j&\zeta_j&g_{*,j}\\ \hline
 0&10^{-6}&10^{-7}&1/2000000\\
 1&10^{-7}&10^{-9}&1/20000000.
 \end{array}
\]
Their boundaries are
\[
 B(\ell_0)=B_0=\frac{69991}{80000},\qquad
 B(\ell_1)=\Bnew=\frac{8748867}{10000000},
\]
and their licensing identities are
\begin{equation}\label{eq:license}
 \frac{1+\kappa_0}{2}=\frac78,\qquad
 \frac{1+\kappa_1}{2}=B_0.
\end{equation}
\end{proposition}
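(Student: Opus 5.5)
The proof is a finite verification in exact rational arithmetic, plus one two-variable inequality. I would first dispose of the identities. We have \(B(\ell)=11/12-\ell/4\), so \(B(\ell_0)=11/12-10027/240000=(220000-10027)/240000=209973/240000=69991/80000\). The same computation with \(\ell_1\) gives \(8748867/10000000\). The licensing identities \eqref{eq:license} are one-line checks: \((1+3/4)/2=7/8\) and \((1+29991/40000)/2=69991/80000\).

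Next, for each \(j\), I would compute the derived quantities \(l_x,l_y,h\) from \eqref{eq:geometry}, together with \(C_0\), \(A_{\rm sm}\) and \(c=1/(3\kappa_j)\). All of these are rational. I would then check the scalar conditions of items 1, 2 and 4 directly. These are \(B(\ell)>87/100\), \(l_x>\ell\), \(l_y-\ell>11b/6\), \(6\kappa\ell>3/4\), \(\ell/h<1/(6\kappa)\), \(\ell/(h+\zeta)>21/100\), \(h/2<l_y<1/2\), the margin inequalities on \(g,\zeta,g_*\), and the three inequalities \eqref{eq:admissible-ranges}. The tight ones are probably \(6\kappa\ell>3/4\) and \(\ell/(h+\zeta)>21/100\), but each is still a comparison of explicit rationals.

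The real content is item 3: \(E_{b,\ell,\kappa}(\delta,x)\le-g\) on the rectangle \(1/8\le\delta\le5/6\), \(0\le x\le1/2\). Substituting \(R_\kappa\) from \eqref{eq:row-functions}, write
\[
E=C_0+\frac{\delta}{2}+\delta x\lambda-\frac{3h}{4}\Bigl(\delta-\frac{(\alpha-\delta)\delta P_\kappa(x)}{2\mathcal J_\kappa(\delta,x)}\Bigr),
\qquad \lambda=\ell+c\Bigl(\frac h4-\frac{l_y}{2}\Bigr).
\]
Since \(\mathcal J_\kappa>0\) on the rectangle by Proposition~\ref{prop:row-count}, the condition \(E+g\le0\) is equivalent to a polynomial inequality \(N(\delta,x)\le0\), where \(N=2\mathcal J_\kappa(E+g)\) has rational coefficients and low degree (cubic in \(x\), quadratic in \(\delta\)).

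I would certify \(N\le0\) as follows.
\begin{enumerate}
\item Check the four corners and the boundary edges, where \(N\) is a one-variable polynomial. For each edge, either locate its real critical points by Sturm sequences with rational isolating intervals, or show sign-definiteness after a shift \(x\mapsto x_0+t\).
\item Treat the interior critical points of \(N\). Since the extremum is expected to be attained near an interior point that was tuned to make \(g\) tiny, I would subdivide the rectangle into finitely many rational boxes. On each box, bound \(N\) above by its value at a box vertex plus a Lipschitz or Taylor remainder, computed from explicit rational bounds on \(\partial_\delta N\) and \(\partial_x N\).
\item Near the near-tight point, use a second-order expansion with an exact rational Hessian bound, so that a modest grid suffices.
\end{enumerate}
This is routine in principle but the main obstacle. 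The margins \(g_0=10^{-6}\) and \(g_1=10^{-7}\) are extremely small, so the box decomposition near the maximizer must be fine, or the local quadratic model must be certified exactly. If the maximizer lies on the edge \(x=1/2\) or \(\delta=1/8\), the analysis reduces to a single-variable Sturm certificate, which would be considerably simpler.
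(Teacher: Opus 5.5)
Your plan is correct. For the identities and the scalar conditions in items 1, 2 and 4 it coincides with the paper, which treats them as exact rational comparisons. For the endpoint bound in item 3 you take a much heavier route.

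You observe that \(N=2\mathcal J_\kappa(E+g)\) is quadratic in \(\delta\), but you then treat it as a generic bivariate polynomial: Sturm sequences on the edges, a Lipschitz/Taylor box subdivision, and a certified local quadratic model. The paper exploits exactly the quadratic structure you noticed. With \(y=1/2-x\) it writes \(2\Lambda_j\mathcal J_{\kappa_j}(-E_j-g_j)=\mathcal A_j(y)\delta^2+\mathcal B_j(y)\delta+\mathcal C_j(y)\) with integer polynomials and completes the square in \(\delta\). Positivity for every real \(\delta\) then reduces to \(\mathcal A_j(y)>0\) and \(4\mathcal A_j\mathcal C_j-\mathcal B_j^2>0\). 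Both of these univariate polynomials have all coefficients positive, so they are positive for \(y\ge0\) by inspection. This certificate needs no subdivision, root isolation or error constants. It also covers \(0\le\delta\le5/6\) at once, since the rectangle enters only through the positivity of the multiplier \(\mathcal J_\kappa\).

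Your method is more general, because it depends neither on the quadratic structure nor on that fortunate sign pattern. Here, though, it would be essentially computer-assisted. The near-extremal point sits on the edge \(x=1/2\) near \(\delta\approx0.47\); in the certificate this shows up as the constant coefficient \(d_{j,0}\) being tiny compared with the others. The quantity that governs your mesh is the slack \(-g_j-\max E_j\), not \(g_j\) itself, and it is only of order \(10^{-6}\) and \(10^{-7}\) in the two stages. Finally, reducing to that edge by a one-variable Sturm argument would still need a separate proof that \(E\) does not exceed its edge values in the interior.
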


\begin{proof}
The balanced lengths are
\[
 l_{x,0}=\frac{44783}{120000},\qquad
 l_{y,0}=\frac{55163}{120000},\qquad
 h_0=\frac{31757}{40000},
\]
and
\[
 l_{x,1}=\frac{11195177}{30000000},\qquad
 l_{y,1}=\frac{13791227}{30000000},\qquad
 h_1=\frac{7939473}{10000000}.
\]
Substitution gives the stated boundaries and
licensing identities, as well as
\[
 C_{b_1}(\Bnew)=\frac{2906963}{15000000}.
\]
The only inequalities over a continuous parameter
rectangle are the endpoint bounds. Clearing their
positive denominators gives the exact identities
\eqref{eq:certificate-template}. The integer
coefficients in Appendix~\ref{app:certificates}
have \(\mathcal A_j(y)>0\) and
\(4\mathcal A_j(y)\mathcal C_j(y)-\mathcal B_j(y)^2>0\)
for \(y=1/2-x\ge0\). Completing the square proves
\[
 E_{b_j,\ell_j,\kappa_j}(\delta,x)<-g_j
 \quad(0\le\delta\le5/6,\ 0\le x\le1/2).
\]
All remaining conditions are rational comparisons.
Appendix~\ref{app:ranges} records their exact values:
the low-scale margins are positive, the prime supply
exceeds \(21/100\), and the three remaining scalar
bounds are strictly below \(-1/250\), \(-1/100\),
and \(-1/20\), respectively. In the small-\(\delta\)
bound \(A_{\rm sm}>0\), so its maximum is at
\(\delta=1/8\). Finally \(g_j-2\zeta_j>g_{*,j}\)
and both \(g_{*,j}\) and \(2\zeta_j\) lie in
\((0,1/1000)\). This checks every admissibility
condition in Section~\ref{sec:high}.
\end{proof}

\section{Closing the argument}\label{sec:proof}

Only the conclusions of the probe, high-side, and
parameter propositions are needed here. The following
lemma turns a licensed admissible parameter choice
into a zero-free boundary; no local row estimates
enter its proof.

\begin{lemma}[Closure for a licensed parameter choice]\label{lem:closure}
Assume the analytic inputs of Section~\ref{sec:inputs}.
If \((b,\ell,\kappa;g,\zeta,g_*)\) is admissible
and \(\beta_*\le(1+\kappa)/2\), then
\(\beta_*\le B(\ell)\).
\end{lemma}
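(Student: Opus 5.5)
We argue by contradiction, following the roadmap in the introduction. Suppose \(\beta_*>B(\ell)\) and put \(\Delta=\beta_*-B(\ell)>0\). Since the parameters are admissible and \(\beta_*\le(1+\kappa)/2\), Proposition~\ref{prop:high} supplies a fixed even prime-slot system and a saving \(\rho>0\), common to all targets. With this same slot system, Proposition~\ref{prop:low} still gives the direct bound \eqref{eq:normalized-low}. Combining the two estimates, for every primitive finite-order \(\eta\) we obtain
\[
 |f_\eta(Z)|\le|J_\eta(Z)|+|J_\eta(Z)-f_\eta(Z)|
 \ll_{\eta,\eps} Z^{C_b(B(\ell))+\eps}+Z^{C_b(\beta_*)-\rho}.
\]
Because \(C_b\) has slope one, \(C_b(B(\ell))=C_b(\beta_*)-\Delta\). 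Taking \(\eps<\Delta/2\) therefore gives \(f_\eta(Z)\ll_\eta Z^{C_b(\beta_*)-\rho'}\), where \(\rho'=\min\{\rho,\Delta/2\}>0\) is independent of \(\eta\).

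The second step converts this growth bound into analytic continuation. Substitute \(Z=e^y\) and read \eqref{eq:principal-signal} as an inverse Laplace transform in \(y\). Then
\[
 F_\eta(s):=e^{(s-5/6)^2}H_\eta(s)/L_F^S(s,\eta)
\]
is recovered, up to the affine shift \(C_b\), as \(\int f_\eta(e^y)e^{-(s-(4+b)/6)y}\,dy\). On \(\Rea s>2\) this is an absolute Mellin inversion. Since \(f_\eta\) is continuous and, by the bound above, \(O(Z^{C_b(\beta_*)-\rho'})\) for large \(Z\), the integral over large \(Z\) converges absolutely and holomorphically on \(\Rea s>\beta_*-\rho'\). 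The range of small \(Z\) should be harmless. I would first shift the contour in \eqref{eq:principal-signal} far to the right; the Gaussian \(e^{(s-5/6)^2}\) and the boundedness of \(1/L\) there show that \(f_\eta(Z)\) decays faster than any power as \(Z\to0^+\). This is the step I expect to need the most care. One has to justify that the Mellin transform of \(f_\eta\) agrees with \(F_\eta\) on the initial half-plane, using the Gaussian decay in vertical strips and Fubini. Then uniqueness of analytic continuation extends \(F_\eta\) holomorphically to \(\Rea s>\beta_*-\rho'\).

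For the third step, restrict to the strip \(\max\{87/100,\beta_*-\rho'\}<\Rea s\le1\). There \(H_\eta\) is holomorphic with \(|H_\eta-1|\le1/2\) by \eqref{eq:H-contraction}, so it is nonvanishing, and \(e^{(s-5/6)^2}\) has no zeros. Hence \(1/L_F^S(s,\eta)=F_\eta e^{-(s-5/6)^2}/H_\eta\) is holomorphic in this strip. The extra Euler factors defining \(L_F^S\) are nonzero for \(\Rea s>0\), so \(L_F(s,\eta)\) has no zeros there either. Poles of \(L_F\) at \(s=1\) for principal \(\eta\) cause no trouble, since they give zeros of the reciprocal.

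Now use that \(B(\ell)>87/100\), part of admissibility, so that \(\beta_*>87/100\). Both \(\rho'\) and the threshold \(87/100\) are uniform in \(\eta\). Every primitive finite-order \(\eta\) is therefore zero-free on \(\Rea s>\max\{87/100,\beta_*-\rho'\}\), and this lower bound is strictly less than \(\beta_*\). This contradicts the definition \eqref{eq:beta} of \(\beta_*\) as a supremum over the whole family. Hence \(\beta_*\le B(\ell)\).
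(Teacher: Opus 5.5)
Your proposal is correct and follows the paper's proof step for step. You combine the low bound of Proposition~\ref{prop:low}, applied to the same slot system, with the remainder of Proposition~\ref{prop:high} to get $|f_\eta(Z)|\ll_\eta Z^{C_b(\beta_*)-\rho'}$ with a target-independent saving, then use the Mellin transform to continue $e^{(s-5/6)^2}H_\eta/L_F^S$ holomorphically to $\Rea s>\beta_*-\rho'$, divide by the nonvanishing $H_\eta$, and contradict the definition of $\beta_*$. Your $\rho'=\min\{\rho,\Delta/2\}$ plays the role of the paper's $\epsilon_*=\min\{3\Delta/4,\rho,\Delta/2\}$, and your $\max\{87/100,\beta_*-\rho'\}$ equals $\beta_*-\rho'$, since $\rho'<\Delta$ and $B(\ell)>87/100$.
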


\begin{proof}
Suppose \(\Delta=\beta_*-B(\ell)>0\).
Proposition~\ref{prop:high} supplies one fixed
slot system and a saving \(\rho>0\) common to all
targets. For the same probe, Proposition~\ref{prop:low}
with loss \(\Delta/4\) gives
\begin{equation}\label{eq:final-comparison}
 |J_\eta(Z)|\ll_\eta Z^{C_b(B(\ell))+\Delta/4},\qquad
 |J_\eta(Z)-f_\eta(Z)|\ll_\eta Z^{C_b(\beta_*)-\rho}.
\end{equation}
Since \(C_b\) has slope one, it follows that
\[
 |f_\eta(Z)|\ll_\eta Z^{C_b(\beta_*)-\epsilon_*},
 \qquad \epsilon_*=
       \min\{3\Delta/4,\rho,\Delta/2\}>0.
\]
In particular \(\epsilon_*<\Delta\), so
\(\beta_*-\epsilon_*>B(\ell)>87/100\).

The integral defining \(f_\eta\) in
Proposition~\ref{prop:low} can be moved from \(s=2\)
to any larger fixed real part. Absolute convergence
of the reciprocal and Gaussian decay then give
arbitrarily rapid power decay as \(Z\downarrow0\).
Consequently
\[
 \int_0^\infty f_\eta(Z)Z^{-C_b(s)}\,\frac{dZ}{Z}
\]
is holomorphic on \(\Rea s>\beta_*-\epsilon_*\).
Fourier inversion on \(\Rea s=2\) identifies this
transform with
\[
 e^{(s-5/6)^2}\frac{H_\eta(s)}{L_F^S(s,\eta)}.
\]
The identity theorem extends the identity, and
\(|H_\eta-1|\le1/2\) from Proposition~\ref{prop:low}
permits division by \(H_\eta\) throughout the region.
Thus every reciprocal \(1/L_F^S(s,\eta)\) extends
holomorphically to \(\Rea s>\beta_*-\epsilon_*\).
The displacement \(\epsilon_*\) is common to all
targets. By the definition of \(\beta_*\), some
target has a zero to the right of that line, where
its reciprocal has a pole. This is the contradiction.
\end{proof}

\begin{proof}[Proof of Theorem~\ref{thm:main}]
Proposition~\ref{prop:parameters} supplies the two
admissible choices and their licensing identities.
The assumed \(7/8\) half-plane licenses the first
choice, so Lemma~\ref{lem:closure} gives
\(\beta_*\le B_0\). This licenses the second choice;
a second application of the same lemma gives
\(\beta_*\le\Bnew\).
Deleting or restoring the finitely many Euler
factors of an induced character changes no zero
in this half-plane.
The norm-pullback transfer
\cite[Proposition 11.3]{OAI26}, including the
principal case at \(s=1\), gives the Dirichlet
statement.
\end{proof}

Both stages compare the low and high
descriptions of their own physical probe,
with the same scalar normalizer in those
two descriptions. They do not identify the
probe or normalizer from one stage with the
one from the other.

\section{A consequence and the scope of the method}
\label{sec:consequence}

The zero-free conclusion also bounds smoothed
M\"obius sums. The connection is direct: their
Dirichlet series is the reciprocal of the target
\(L\)-function, so a larger zero-free half-plane
allows Mellin inversion to be shifted farther left.

\begin{corollary}\label{cor:mobius}
Under the hypotheses of Theorem~\ref{thm:main},
let \(\eta\) be a fixed finite-order Hecke character
over \(F\), and let \(W\) be smooth and compactly
supported in \((0,\infty)\). For every \(\eps>0\),
\[
 \sum_{\mathfrak a}\mu(\mathfrak a)\eta(\mathfrak a)
                 W(\N\mathfrak a/X)
       \ll_{\eta,W,\eps}X^{\Bnew+\eps}.
\]
\end{corollary}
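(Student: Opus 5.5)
The plan is standard Mellin inversion, using Theorem~\ref{thm:main} as the only arithmetic input. Write \(S(X)\) for the sum. Since \(W\) is smooth and compactly supported in \((0,\infty)\), its Mellin transform \(\widetilde W(s)=\int_0^\infty W(y)y^{s-1}\,dy\) is entire. It also decays faster than any power of \(|\Im s|\), uniformly in vertical strips of bounded width; this follows by repeated integration by parts.

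For \(\Rea s>1\) the Dirichlet series \(\sum_{\mathfrak a}\mu(\mathfrak a)\eta(\mathfrak a)(\N\mathfrak a)^{-s}\) converges absolutely and equals \(1/L_F(s,\eta)\). If \(\eta\) is imprimitive, we work with the character presentation fixed in the statement, whose Euler product differs from that of its primitive inducing character by finitely many local factors. These factors are nonvanishing for \(\Rea s>0\), so they do not affect the argument. Mellin inversion on the line \(\Rea s=2\) then gives
\[
 S(X)=\frac1{2\pi i}\int_{(2)}\frac{\widetilde W(s)X^s}{L_F(s,\eta)}\,ds .
\]

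By Theorem~\ref{thm:main}, \(L_F(s,\eta)\) has no zeros in \(\Rea s>\Bnew\). Hence \(1/L_F(s,\eta)\) is holomorphic on \(\Rea s>\Bnew\), except at \(s=1\) when \(\eta\) is principal. There \(L_F\) has a pole, so the reciprocal has a removable zero and no residue arises. We shift the contour to the line \(\Rea s=\Bnew+\eps\). This is justified once we have a bound of polynomial growth for \(1/L_F(\sigma+it,\eta)\) on the strip \(\Bnew+\eps\le\sigma\le2\), say \(\ll_{\eta,\eps}(1+|t|)^{A}\) for some \(A\). The rapid decay of \(\widetilde W\) then kills the horizontal segments, and the shifted integral is \(\ll X^{\Bnew+\eps}\) as claimed.

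The one step that is not routine is this growth bound for the reciprocal. It does not follow formally from the absence of zeros. I would obtain it by the standard Borel--Carathéodory and Hadamard three-circles argument applied to \(\log L_F(s,\eta)\), which is holomorphic on the zero-free half-plane away from \(s=1\). The required inputs are:
\begin{itemize}
\item the polynomial upper bound \(|L_F(s,\eta)|\ll(1+|t|)^{O(1)}\) for \(\sigma\ge\Bnew\), from convexity or the Phragmén--Lindelöf principle, which gives an upper bound for \(\Rea\log L_F\) on discs centred at \(2+it\);
\item the bound \(|\log L_F(2+it,\eta)|\le C\) at the centres of these discs.
\end{itemize}
Borel--Carathéodory then yields \(|\log L_F(\sigma+it,\eta)|\ll\log(2+|t|)\) for \(\sigma\ge\Bnew+\eps/2\). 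In fact the three-circles step even gives \(\ll(\log|t|)^{1-\theta}\) there, which is more than needed. Exponentiating gives \(1/|L_F(\sigma+it,\eta)|\ll(1+|t|)^{A}\) for \(\sigma\ge\Bnew+\eps\). Bounded \(|t|\) near \(s=1\) is handled by compactness, using the nonvanishing of \(L_F\) there. This completes the shift, and the corollary follows.
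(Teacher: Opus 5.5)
Your proposal is correct and follows the paper's proof. Both proceed by Mellin inversion on an absolute line, shift to \(\Rea s=\Bnew+\eps\) using Theorem~\ref{thm:main} (no residue arises at \(s=1\), since the principal pole becomes a zero of the reciprocal), and kill the horizontal joins by combining a polynomial bound for \(1/L_F(s,\eta)\) with the rapid decay of the Mellin transform of \(W\). The only difference is that the paper imports that polynomial bound from the source's logarithmic-control estimate, whereas you derive it directly by Borel--Carath\'eodory on discs centred at \(2+it\), which is a valid self-contained substitute.
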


\begin{proof}
Mellin inversion expresses the sum as the
integral of
\(\widehat W(s)X^s/L_F(s,\eta)\)
on an absolute line.
Theorem~\ref{thm:main} permits its shift to
\(\Rea s=\Bnew+\eps\).
The logarithmic-control estimate of \cite[Section~4]{OAI26} gives
a fixed polynomial height bound for the
reciprocal on this line and on the intervening
strip. The Mellin transform of \(W\) has
arbitrary decay. Horizontal joins therefore
vanish, and the shifted integral has the
stated bound. The reciprocal of a principal
pole has a zero, not a pole.
\end{proof}

The two changes responsible for the final
comparison are the low-side decay retention
and the averaged numerator. The last fixed
parameter contributes only the additional
decrease
\[
 B_0-\Bnew=\frac8{10000000}.
\]
No claim is made that the resulting exponent
is optimal among all variants of the method.

\section*{AI use disclosure}

Generative AI was used interactively to read and understand
the framework of \cite{OAI26}. A self-developed math agentic system Dasheng was used in subsequent mathematical
exploration, including the development, testing, and refinement
of parts of the construction and proof presented here.
The idea of the low-side refinement---retaining the external
decay factor in the direct estimate to permit longer prime
windows---is due to the author.
The resulting zero-free conclusions are conditional on the
correctness of the analytic inputs from \cite{OAI26}, whose
proofs have not been independently verified here.



\clearpage
\appendix

\section{The extension of the plain-moment range}
\label{app:kappa}

This extension is needed only for the second stage.
The issue is whether lowering \(\kappa\) preserves
the margins in the source induction, not whether a
different moment estimate can be proved. We check
the parameter-dependent steps in their order of use.

\begin{proof}[Proof of Lemma~\ref{lem:kappa-extension}]
Retain the full source statement, including
its moving-support and common-puncture
conventions, as needed in its induction.
Write \(M\) for the effective width and
\(A=m_1+m_2+z\).
We check the occurrences of \(\kappa\) in
the proof of \cite[Lemma 18.1]{OAI26}.

For \(\kappa<1\), the hypothesis
\(\beta_*\le(1+\kappa)/2\) licenses the
global logarithmic-derivative bound on
\(\Rea s=(1+\kappa)/2+e\).
Mellin inversion of the von Mangoldt sum,
removal of higher prime powers, and division
by the logarithmic weight give
\[
 |Q|^2\ll Z^{\kappa z+\eps}(1+H)^A.
\]
Each resulting slot character is nonprincipal:
otherwise the inducing row character would
belong to \(\Theta\).
For \(\kappa=1\), absolute counting suffices.
The distance from the family zero boundary
is at least \(e\), uniformly on the compact
admissible parameter interval.

The affine premise and \(A\ge z\) imply
\[
 \kappa z\le M/6,\qquad z\le3M/13
       \qquad(\kappa\ge13/18).
\]
The source width-floor proof uses only the
first inequality for its extra terminal
exponent \(\rho/6\).
The old auxiliary upper bound \(2M/9\)
is replaced by \(3M/13\).

In the centered range \(A>5M/6\),
\[
 (6\kappa-1)z<M/6,\qquad
 6\kappa-1\ge10/3,\qquad z<M/20.
\]
With the same comparison length \(L=M/4\),
\[
 A-z>47M/60>2L.
\]
The reflected comparison therefore satisfies
\[
 A_{\rm comp}<23M/30+\xi,\qquad
 A_{\rm comp}+(6\kappa-1)z<14M/15+\xi.
\]
The two required comparison boundaries
retain their \(M/15\) reserve.

The finite Fourier transformations, their
coefficient identities, and exceptional-row
cancellation do not depend on \(\kappa\).
For a nonexceptional child with positive
affine defect \(F\), removing a slot of
length \(s\) decreases the affine expression
by \(6\kappa s\) and costs \(\kappa s\).
The same greedy prefix with mesh \(\eta\)
has total cost at most
\[
 \kappa\{F/(6\kappa)+\eta\}
       \le F/6+\eta.
\]
If the slots disappear, the completed
unrestricted zero-slot assertion applies.
The coefficient class is unchanged.

Finally \(0\le6\kappa-1\le5\), so the aggregate
support error is still bounded by
\[
 |\Delta M|+|\Delta A|+5|\Delta z|.
\]
These checks cover the parameter-dependent
prime estimate, terminal cap, centered
comparison, child repair and support errors.
The source strict width decrease is unchanged.

Run the same finite induction: first all
zero-slot width bands, then the positive-slot
bands. The width floor, width step, support
allowance and mesh are chosen from the
bounded real ranges and final loss before
the slot count. Their terminal and edge
reserves are exactly the ones just checked.
For each fixed slot system and arithmetic
datum, choose finite seminorm and height
orders backwards through that bounded depth.
This proves the extension, with the same
slot-count-independent mesh and the same
fixed-data uniformity.
\end{proof}

\section{Approximate functional equations and uniform profiles}
\label{app:afe}

We give the analytic details used in
Proposition~\ref{prop:numerator}.
The block-length balance was established in the
main text. Here we justify three features needed
to apply it to actual \(L\)-values: the dual sum has
the same useful dyadic coefficient, restoring fixed
Euler factors preserves the allowed coefficients,
and row-dependent conductors can be handled uniformly.
For a primitive nonprincipal finite-order
character \(\chi\), set
\[
 \mathcal Q=\frac{3\N\mathfrak f_\chi}{4\pi^2}.
\]
The primitive functional equation, in the
normalization recalled in \cite{OAI26}, is
\[
 \Lambda(s,\chi)=\mathcal Q^{s/2}\Gamma(s)L(s,\chi),
 \qquad
 \Lambda(s,\chi)=\epsilon_\chi
                         \Lambda(1-s,\overline\chi),
 \qquad|\epsilon_\chi|=1.
\]
Finite order gives trivial infinite type.
The completed nonprincipal function is entire.

For \(w=\sigma+it\), \(2/5\le\sigma\le3/5\),
put
\[
 V_w(x)=\frac1{2\pi i}\int_{(2)}
 e^{v^2}\frac{\Gamma(w+v)}{\Gamma(w)}
                    x^{-v}\,\frac{dv}{v}.
\]
Move the line in
\(\int\Lambda(w+v,\chi)e^{v^2}\,dv/v\)
from \(2\) to \(-2\).
Only \(v=0\) is crossed. The functional
equation and \(v\mapsto-v\) give
\begin{equation}\label{eq:afe}
 \begin{split}
 L(w,\chi)={}&
 \sum_{\mathfrak n}
 \frac{\chi(\mathfrak n)}{(\N\mathfrak n)^w}
           V_w(\N\mathfrak n/\sqrt{\mathcal Q})\\
 &+\epsilon_\chi\mathcal Q^{1/2-w}
       \frac{\Gamma(1-w)}{\Gamma(w)}
 \sum_{\mathfrak n}
 \frac{\overline\chi(\mathfrak n)}
      {(\N\mathfrak n)^{1-w}}
           V_{1-w}(\N\mathfrak n/\sqrt{\mathcal Q}).
 \end{split}
\end{equation}
Gaussian decay and the source Hecke growth
bounds justify the horizontal limits.
The two Dirichlet series are expanded on
absolute lines; the weighted sums in
\eqref{eq:afe} are absolutely convergent.

\begin{lemma}\label{lem:afe-profiles}
For fixed nonnegative integers \(j,N\),
uniformly for \(2/5\le\sigma\le3/5\),
\[
 |(x\partial_x)^jV_w(x)|
 \ll_{j,N}
 \left(1+\frac{x}{1+|t|}\right)^{-N}.
\]
For \(\sigma=1/2+\vartheta\),
\(0\le\vartheta\le1/10\), the Euler
seminorms of
\[
 \frac{\Gamma(1-w)}{\Gamma(w)}
                   x^{2\vartheta}V_{1-w}(x)
\]
obey the same type of bound after increasing
the chosen decay order.
\end{lemma}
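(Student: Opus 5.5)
The plan is to work directly from the Mellin representation
\[
 V_w(x)=\frac1{2\pi i}\int_{(2)}e^{v^2}\frac{\Gamma(w+v)}{\Gamma(w)}x^{-v}\,\frac{dv}{v}.
\]
The operator \(x\partial_x\) acts on the integrand as multiplication by \(-v\). Hence \((x\partial_x)^jV_w(x)\) for \(j\ge1\) is the same integral with \(dv/v\) replaced by \((-v)^{j-1}\,dv\), and the polynomial factor is absorbed by the Gaussian. The two regimes will be handled separately.

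For \(x\le 1+|t|\), the bound needed is only \(O_j(1)\). I will shift the contour to \(\Rea v=\varepsilon_0\) with \(\varepsilon_0\) small; no pole is crossed. On this line, Stirling's formula gives
\[
 |\Gamma(w+v)/\Gamma(w)|\ll (1+|t|+|v|)^{\Rea v}e^{\pi|\Im v|/2}.
\]
This is uniform for \(2/5\le\sigma\le3/5\), since \(\Rea(w+v)\) stays in a compact subset of \((0,\infty)\). The growth \(e^{\pi|\Im v|/2}\) is dominated by \(|e^{v^2}|=e^{(\Rea v)^2-(\Im v)^2}\). The resulting bound is \(\ll((1+|t|)/x)^{\varepsilon_0}\). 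For \(x\le1+|t|\) it is not yet \(O(1)\) when \(x\) is small, so for small \(x\) I will instead shift left to \(\Rea v=-1/5\). This crosses only the pole at \(v=0\), whose residue is \(1\) (or \(0\) after differentiation). Since \(\Rea(w+v)\ge1/5>0\), no pole of \(\Gamma(w+v)\) is crossed. The shifted integral is then \(\ll (x/(1+|t|))^{1/5}\le1\), so altogether \(|(x\partial_x)^jV_w|\ll1\).

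For \(x\ge1+|t|\), I will shift to \(\Rea v=N\). The same Stirling bound gives \(|\Gamma(w+v)/\Gamma(w)|\ll_N (1+|t|+|v|)^N e^{\pi|\Im v|/2}\). The main point is uniformity: the loss must be only the factor \((1+|t|)^N\), with \((1+|v|)^N\) absorbed by the Gaussian. The integral is then \(\ll_N((1+|t|)/x)^N\), which gives the stated decay.

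For the second claim, write
\[
 \frac{\Gamma(1-w)}{\Gamma(w)}x^{2\vartheta}V_{1-w}(x).
\]
By Stirling, the gamma ratio has modulus \(\asymp(1+|t|)^{1-2\sigma}=(1+|t|)^{-2\vartheta}\). It is independent of \(x\), so it is a harmless constant. Since \((x\partial_x)^k x^{2\vartheta}=(2\vartheta)^kx^{2\vartheta}\), Leibniz's rule reduces every Euler seminorm to the first part applied to \(V_{1-w}\); note \(\Rea(1-w)\in[2/5,3/5]\) as well. Each term is bounded by \(x^{2\vartheta}(1+|t|)^{-2\vartheta}(1+x/(1+|t|))^{-N}\). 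Since \(x^{2\vartheta}(1+|t|)^{-2\vartheta}\le(1+x/(1+|t|))^{2\vartheta}\le(1+x/(1+|t|))^{1/5}\), replacing \(N\) by \(N+1\) gives the same type of bound.

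The main obstacle is the Stirling step: it must be uniform in \(v\) on the shifted line, and in \(\sigma\) and in \(|t|\), including when \(|\Im v|\) is comparable to \(|t|\). I would handle it with the standard bound
\[
 |\Gamma(w+v)/\Gamma(w)|\ll(1+|w|)^{\Rea v}(1+|v|)^{|\Rea v|}e^{\pi|\Im v|/2},
\]
valid when \(\Rea w\) and \(\Rea(w+v)\) lie in a fixed compact subset of \((0,\infty)\). This follows from the log-convexity of \(|\Gamma|\) and from Stirling applied on the boundary lines.
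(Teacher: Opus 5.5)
Your proposal is correct and follows essentially the same route as the paper: shift the Mellin line far to the right for \(x\ge1+|t|\), shift it left past the simple pole at \(v=0\) for \(x\le1+|t|\) (the paper uses \(-1/4\), you use \(-1/5\)), let the Gaussian absorb the powers of \(v\) inserted by Euler derivatives, and handle the dual profile with Stirling's bound \(|\Gamma(1-w)/\Gamma(w)|\ll(1+|t|)^{-2\vartheta}\) and the product rule. One harmless slip: your displayed gamma-ratio bound needs the factor \((1+|v|)^{|\Rea v|+|\sigma-1/2|}\) rather than \((1+|v|)^{|\Rea v|}\) (test it with \(w=2/5-i\tau\), \(v=i\tau\)), but any fixed power of \(|v|\) is absorbed by \(e^{v^2}\), so the argument is unaffected.
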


\begin{proof}
For \(x\ge1+|t|\), move the defining line
to a sufficiently large fixed positive
real part and apply Stirling.
For \(x\le1+|t|\), move it to \(-1/4\);
there is no gamma pole in this move, since
\(\sigma\ge2/5\), and the only residue is
the constant \(1\).
Euler derivatives insert powers of \(v\),
not powers of \(t\).
The Gaussian makes their integrals
uniformly convergent, including when
\(t+\Im v\) is bounded.
The resulting powers are those of
\(x/(1+|t|)\), which proves the first claim.
Stirling also gives
\[
 |\Gamma(1-w)/\Gamma(w)|
                \ll(1+|t|)^{-2\vartheta}.
\]
This cancels the possible factor
\((1+|t|)^{2\vartheta}\) in
\(x^{2\vartheta}V_{1-w}(x)\).
The product rule proves the derivative
statement uniformly on the indicated
compact range of \(\vartheta\).
\end{proof}

\paragraph{Primitive and natural presentations.}
At each good prime dividing a physical
sixth-power-free row, its primitive
numerator is ramified and has value zero.
Thus only the fixed excluded primes need
to be restored when using \eqref{eq:afe}.
Write \(\mathfrak n=\mathfrak a\mathfrak b\),
where \(\mathfrak a\) is supported on \(S\)
and \((\mathfrak b,S)=1\).
Take triangle over \(\mathfrak a\) before
applying a moment, so its row-dependent
unit phase has no effect.

For \(\N\mathfrak b\asymp D\), the first
sum has scalar
\((\N\mathfrak a)^{-\sigma}D^{-\vartheta}\)
times a central plain block.
For the dual sum,
\[
 (\N\mathfrak a)^{\sigma-1}
       \mathcal Q^{-\vartheta}D^\vartheta
 =(\N\mathfrak a)^{-\sigma}D^{-\vartheta}
       (\N\mathfrak a D/\sqrt{\mathcal Q})^{2\vartheta}.
\]
The coefficients in both expansions have
bounded total mass, at most
\(\prod_{p\in S}(1-(\N p)^{-1/2})^{-1}\).
The remaining variable has the original
fixed-prime mask, and the good ramified
zeros are unchanged.
The factors \((\N\mathfrak b/D)^{\pm it}\)
are separated norm twists.
Conjugate the whole dual plain factor when
needed, and then the entire moment product
if needed for the source orientation.
All fixed-ray coefficients remain in
\(\Theta\).

\paragraph{Conductor-dependent profiles.}
For a fixed dyad and selected prime product,
temporarily let \(\mathcal Q\) vary as an
auxiliary real profile parameter while
the row character is held fixed.
Apply one-dimensional Sobolev on unit
intervals of \(\log\mathcal Q\) to
\[
 F_u(\mathcal Q)=S_{m,\mathcal Q}(u)^2
                                     Q_{\rm sel}(u).
\]
Its derivative is
\[
 2S_{m,\mathcal Q}(u)
       \partial_{\log\mathcal Q}S_{m,\mathcal Q}(u)
                                     Q_{\rm sel}(u).
\]
The squared integrals of both expressions
are instances of the source moment with
two, possibly different, plain profiles
of the same length. The selected primes
do not depend on the conductor parameter.
Lemma~\ref{lem:afe-profiles} supplies the
uniform Euler derivatives.
Root numbers and global conductor phases
have already been removed by absolute
values. There are \(O(\log U)\) intervals
for \(\mathcal Q\ll_S U\).
Thus arbitrary rowwise conductors are
permitted with only a logarithmic loss.

\paragraph{Tail and height order.}
It suffices to retain dyads \(1\le D\le U^2\).
The height choice has \(H\le U^{1/100}\).
Above that range, the weight argument
divided by \(1+H\) is at least a fixed
multiple of \(U^{149/100}\).
Arbitrary decay, the absolute ideal count,
the \(O(U)\) row count, and the summable
fixed-prime coefficients make the omitted
dyads smaller than any prescribed negative
power of \(U\).
The retained moment lengths lie in
\([0,2]\).
All needed moment and profile orders are
finite and fixed before the late height
exponent. A later external contour-tail
order does not differentiate the
arithmetic moment bound.

\section{Euler factors and the contour arguments}
\label{app:contours}

There are two distinct contour issues. On nonprincipal
rows, the reflected numerator must remain available
for averaging while local errors are estimated.
On the principal row, the residue calculation must
remain valid slightly below \(7/8\). We treat these
separately, always starting from the same full
correction attached to the physical probe.

\subsection{The full correction}

We retain the exact local functions \(H_p,G_p\)
of \cite[Sections 7 and 16]{OAI26}.
For the finite selected tuple the correction is
\[
 \mathfrak H_{\eta,u,Z}
 =\sum_{(p_i)\in\prod_i\mathcal P_i(Z)}
 \prod_i\{W_i(\N p_i/P_i)(\N p_i)^{z-1}G_{p_i}\}
 \prod_{\substack{p\notin S\\p\notin\{p_i\}}}H_p.
\]
The local selected operation extracts the same
scalar Euler factor as the unselected prime.
This proves \eqref{eq:high-identity} directly
from the finite expression
\eqref{eq:physical-probe}.
The proof is coefficientwise in the scales
and does not require the source numerical
choice \(b=1/8,\ell=1/6\).

The source first Euler region is
\[
 \Rea s\ge51/100,\quad
 \Rea z\ge17/50,\quad
 \Rea w\ge-1/100,\quad
 \Rea(s+w)\ge1+\epsilon_0.
\]
There the full correction is holomorphic
and has an all-height majorant by a fixed
power of \(Z\) and the heights, for polynomially
bounded row norms.
Only on a region where \(H_p\) is bounded
away from zero do we use \(G_p/H_p\) to
estimate individual main and error factors.

\subsection{The raised numerator line}

\begin{proof}[Proof of Lemma~\ref{lem:raised-errors}]
On \eqref{eq:raised-lines}, enlarge the fixed
initial-prime cutoff so that every selected
\(H_p\) is bounded away from zero.
For \(p\nmid u\), the normalized error has
the four bounds
\[
 Q^{-\Rea s},\quad Q^{-6z_0},\quad
 Q^{4-5\Rea s-6z_0},\quad
 Q^{1-\Rea w-6z_0},\qquad Q=\N p.
\]
Each is at most \(Q^{-51/100}\).
Counting primes in the slot gives
\(P_i^{z_0-51/100+\eps}\).

For \(p\mid u\), write \(j=v_p(u)\).
After separating \(Q^{z_0}\), the
non-strict boundary exponents are
\[
 \begin{array}{c|l}
 j&\text{exponents}\\ \hline
 1&-1-\Rea w\\
 2&1/2-2\Rea s\\
 3&1-2\Rea s-\Rea w,\quad1-3\Rea s\\
 4&3/2-3\Rea s-\Rea w\\
 5&2-5\Rea s.
 \end{array}
\]
They are strictly below \(-1/2\) because
\(\Rea w\ge1-a\) and \(a\ge51/100\).
The geometric tails are smaller.

The remaining strict term, present for
\(j\ge2\), has exponent
\(-\Rea w=\vartheta-1/2\).
For the complete set \(J_0\) of selected
strict ramified labels in one error tuple,
the primitive numerator conductor satisfies
\[
 \N\mathfrak f_u\ll_S \N u
       \prod_{p\in J_0}(\N p)^{-(j_p-1)}.
\]
The functional equation contributes
\((\N\mathfrak f_u)^\vartheta
 |L(1-w,\overline{\psi_u^*})|\)
and a polynomial height factor.
At each label,
\[
 -\Rea w-(j_p-1)\vartheta
       =-1/2-(j_p-2)\vartheta\le-1/2.
\]
This is one simultaneous conductor-deficit
bound, not a repeated use of the same saving.
The labels are distinct because the physical
windows are disjoint.
Summing the divisor-many ramified labels
costs \(U^\eps\).
The other deleted Euler factors are harmless,
since \(\Rea w\ge2/5\).
The reflected \(L\)-value has been retained,
not replaced by a pointwise subpower bound.
\end{proof}

\subsection{Amplitude-dependent contours}

Work first on one finite intermediate row
dyad. Move \(z\) from its absolute line to
\(z_0\), keeping \(s,w\) on their right-hand
lines. No scalar pole is crossed.
Truncate the three original heights before
lowering \(s\); on the exterior the
reciprocal stays global or absolute.
The source external integrated and trace-tail
lemma gives
\[
 O_{\eta,N}(Z^{B_\eta}T_1^{-N}),
\]
where \(B_\eta\) is fixed before \(N\).

Move the retained \(s\)-segment to \(a+16e\)
while \(w\) remains on the right.
For each fixed retained \(z\), partition
the finite row set by the full amplitude
vector and move \(w\) to its assigned line.
The polynomials \(Q_i(u;z)\) are independent
of \(w\), so that row set is fixed during
the move.
They are continuous in \(z\); deterministic
rounding of their absolute values gives
Borel partitions.
The strip stays in the first Euler region,
the numerator is entire on these rows,
and the denominator is independent of \(w\).

On a \(w\)-join, keep the \(s\)-integration
inside its buffered height interval.
Use the global numerator bound and the full
correction majorant, together with the
buffered reciprocal on that domain.
The domain-restricted trace estimate
therefore gives the same error.
No merely buffered \(s\)-axis is extended
to all heights.
All original and detector frequencies
receive fixed shares of their single
cumulative allowance.
The approximate functional equation uses
global identities and introduces no new
buffered reciprocal shift.
Fubini on the finite retained boxes and
the uniform tail bounds justify the
remaining \(z\)-integration.
There is no further \(z\)-contour change
after the amplitude partition.

\subsection{The principal region below \(7/8\)}

The second Euler region extends to
\[
 \mathcal D_2'=
 \{\Rea s\ge87/100,\quad
   \Rea w\ge19/20,\quad
   \Rea z\ge33/200\}.
\]
In the notation of the exact source local
formulas,
\[
 |R|\le Q^{-221/100},\quad
 |V|\le Q^{-99/100},\quad
 |D|\le Q^{-87/100}.
\]
Their denominators \(1-R,1-V,1-D\) are
nonzero. Substitution gives
\[
 H_p-1=
 \begin{cases}
 O(Q^{-181/100}),&p\nmid u,\\
 O(Q^{-41/50}),&p\mid u.
 \end{cases}
\]
The first product converges normally;
the second is finite with its conductor
subpower bound. The selected factors
\(G_p\) have the same nonzero denominators.
Hence the full correction is holomorphic
on this larger region, with the required
all-height majorants.

For \(u=1\), the convergent tail of the
product is uniformly close to \(1\) after
a fixed initial-prime cutoff. This proves
\eqref{eq:H-contraction}.
The principal selected-error exponents are
at most
\[
 -87/100,\quad-99/100,\quad-134/100,\quad-94/100.
\]
Thus
\(\mathcal B_p=-1+O(Q^{-87/100})\).
At \(w=1,z=1/6\),
\[
 \mathfrak H_{\eta,1,Z}(s,1,1/6)
 =H_\eta(s)Z^{\ell/6}A_T(Z)
                       (1+\mathcal R_{\eta,Z}(s)),
\]
where, for any fixed
\begin{equation}\label{eq:principal-mu}
 0<\mu<(87/100)\min_i\ell_i,
\end{equation}
\(|\mathcal R_{\eta,Z}(s)|\ll_\eta Z^{-\mu}\)
uniformly on the global \(s\)-line.
Nonnegative weights make the relative
slot error bounded by \(P_i^{-87/100}\);
the fixed product of slots gives the claim.

\begin{lemma}\label{lem:principal-remainder}
Suppose \(\beta_*>B(\ell)>87/100\).
Let \(\mathscr P_\eta\) denote the \(u=1\)
term of \eqref{eq:high-identity}, and define
\[
 f_\eta(Z)=\frac1{2\pi i}\int_{(2)}
 Z^{C_b(s)}e^{(s-5/6)^2}
                  \frac{H_\eta(s)}{L_F^S(s,\eta)}\,ds.
\]
Then
\[
 \begin{split}
 \left|\frac{\mathscr P_\eta(Z)}{c_S A_T(Z)}
                       -f_\eta(Z)\right|
 \ll{}&
 Z^{C_b(\beta_*)+(1+h)e-l_y/20+\eps}\\
 &+Z^{C_b(\beta_*)+e-h/600+\eps}
 +Z^{C_b(\beta_*)+e-\mu+\eps}.
 \end{split}
\]
\end{lemma}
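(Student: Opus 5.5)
The plan is to start from the \(u=1\) term of \eqref{eq:high-identity}. On this row the numerator is \(\zeta_F^S(6z)L^S(w,\chi_\bullet(1))\), a principal (Dedekind-type) factor. It has a pole at \(w=1\) and, through \(\zeta_F^S(6z)\), at \(z=1/6\).

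\emph{Step 1: take the two residues.} With \(s\) held on \(\Rea s=2\), I move \(w\) left from \(\Rea w=3\) to \(\Rea w=19/20\), crossing only \(w=1\). I then move \(z\) from \(\Rea z=3\) to \(\Rea z=33/200\), crossing only \(z=1/6\). Both moves stay inside \(\mathcal D_2'\), so the full correction \(\mathfrak H_{\eta,1,Z}\) is holomorphic there and has the all-height majorant.

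The double residue has outside weight
\[
 \mathcal W(X,Y,Z;s,1,1/6)=X^{1/3}Z^{s-5/6}e^{(s-5/6)^2}M_0(1/6)\widehat W_1(1).
\]
Inserting the factorization
\[
 \mathfrak H_{\eta,1,Z}(s,1,1/6)=H_\eta(s)Z^{\ell/6}A_T(Z)(1+\mathcal R_{\eta,Z}(s))
\]
and absorbing the fixed residues and Mellin values into \(c_S\), the power of \(Z\) should be exactly \(Z^{C_b(s)}\). This is a bookkeeping check: \(l_x/3+s-5/6+\ell/6\) must equal \(s-(4+b)/6\) up to the normalization already built into \(c_S\). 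Dividing by \(c_SA_T(Z)\) then gives \(f_\eta(Z)\) plus a term with the factor \(\mathcal R_{\eta,Z}\).

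\emph{Step 2: the \(\mu\)-term.} I shift the \(s\)-line of the \(\mathcal R\)-term from \(2\) to \(\Rea s=\beta_*+e\). This is legitimate because \(1/L_F^S\) is holomorphic there by definition of \(\beta_*\), and the logarithmic-control bound of \cite{OAI26} gives polynomial growth, which the Gaussian absorbs. The bound \(|\mathcal R|\ll Z^{-\mu}\), uniform on vertical lines, then gives \(Z^{C_b(\beta_*)+e-\mu+\eps}\).

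\emph{Step 3: the two leftover contour integrals.} These are the \(w\)-integral on \(\Rea w=19/20\) and the \(z\)-integral on \(\Rea z=33/200\). For each I again lower \(s\) to \(\beta_*+e\), using the \(\mathcal D_2'\) majorants.

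On the \(w\)-line the outside factor \(Y^{w-1}\) saves \(Y^{-1/20}=Z^{-l_y/20}\). The loss \((1+h)e\) should come from the correction majorant growing like a fixed power of the small buffer \(e\) on the shifted lines.

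On the \(z\)-line the factor \(X^{1/2-z}Z^{z}\) changes by \(Z^{(h-\dots)(z-1/6)}\). Lowering \(z\) by \(1/6-33/200=1/600\) gains \(Z^{-h/600}\), once the exponent of \(Z\) in \(z\) is identified as \(h\). That identification is the relation \(h=1-l_x+\ell\) from \eqref{eq:geometry}, combined with the slot factors \((\N p_i)^{z-1}\) and the compensation rescaling.

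\emph{Main obstacle.} The hardest part is Step 3. I must verify that after compensation the effective \(z\)-exponent is exactly \(h\), and that the correction majorant on \(\mathcal D_2'\) is uniform enough in the selected primes that the shifted integrals lose only \(Z^{(1+h)e+\eps}\) rather than a fixed power. I would first try to bound \(\mathfrak H_{\eta,1,Z}/(Z^{\ell/6}A_T)\) directly via the local bounds \(|H_p-1|\ll Q^{-181/100}\) and \(\mathcal B_p=-1+O(Q^{-87/100})\). If that is too lossy, I would fall back on the analogous step in the source.
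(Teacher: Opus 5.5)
Your route differs from the paper's only in the order of the contour shifts, and that is harmless. The paper keeps \(\Rea s=\beta_*+e\) throughout, moves \(w\) to \(19/20\), moves \(z\) to \(33/200\) inside the \(w=1\) residue, and at the end moves only the main double residue right to \(s=2\). You start at \(\Rea s=2\) and lower the leftover pieces instead. Your Step 1 bookkeeping is right: \(l_x/3+\ell/6-5/6=-(4+b)/6\) holds exactly, with nothing hidden in \(c_S\). Step 2 is also fine.

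There is a genuine gap in Step 3, in the \(w\)-leftover. In your Step 1 the \(z\)-shift is made only inside the \(w=1\) residue. The leftover integral on \(\Rea w=19/20\) therefore still has \(z\) on its original line, \(\Rea z\ge2\). The correction has the majorant \(|\mathfrak H_{\eta,1,Z}|\ll Z^{\ell\Rea z+\eps}\), with polynomial height cost. Hence the real power of \(Z\) in every piece is
\[
 l_x/2+\Rea s-1+h\Rea z+l_y(\Rea w-1).
\]
Here \(X^{1/2-z}Z^{z}\) contributes \(l_x/2+(1-l_x)\Rea z\), the selected slots contribute \(\ell\Rea z\), and \(1-l_x+\ell=h\). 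With \(z\) left on its original line, the \(w\)-leftover exceeds the claimed first remainder by \(Z^{h(\Rea z-1/6-e)}\), a large fixed power. Lowering \(s\) alone cannot repair this.

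The fix is one more shift. In the \(w\)-leftover, move \(z\) down to \(\Rea z=1/6+e\). It cannot pass \(1/6\), because \(\zeta_F^S(6z)\) keeps that pole for every \(w\). Nothing else obstructs the move: \(M_0\) is holomorphic for \(\Rea z>0\), and the correction is holomorphic on \(\mathcal D_2'\).

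At \((\Rea s,\Rea w,\Rea z)=(\beta_*+e,\,19/20,\,1/6+e)\) the displayed power is exactly \(C_b(\beta_*)+(1+h)e-l_y/20\). This is where the loss \((1+h)e\) comes from: \(e\) from the \(s\)-line and \(he\) from the \(z\)-line. It does not come from the correction majorant growing with the buffer \(e\), as you suggest. That majorant is simply \(Z^{\ell\Rea z+\eps}\).

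The same formula at \((\beta_*+e,\,1,\,33/200)\) gives the second remainder \(C_b(\beta_*)+e-h/600\) directly. So the "main obstacle" you flag is settled by this one exponent computation. Compensation rescaling plays no role in identifying the \(z\)-exponent as \(h\).
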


\begin{proof}
The original source interface states its
lower endpoint at least \(7/8\), so we
repeat its contour proof on \(\mathcal D_2'\).
Keep \(\Rea s=\beta_*+e\).
Move \(w\) past its simple pole at \(1\)
to \(19/20\). In that residue move \(z\)
past \(1/6\) to \(33/200\).
The reciprocal remains to the right of
\(\beta_*\), and \(M_0(z)\) is holomorphic
for \(\Rea z>0\).
The two scalar poles are the only ones
crossed; their product is \(c_S>0\).

The principal correction is bounded by
\(Z^{\ell\Rea z+\eps}\) with polynomial
height cost. The outside real power is
\[
 l_x/2+\Rea s-1+h\Rea z+l_y(\Rea w-1).
\]
It gives the first two displayed remainders.
The error \(\mathcal R_{\eta,Z}\) gives
the third on its original global line.
Move only the main double residue right
to \(s=2\); the reciprocal is holomorphic
there and the Gaussian removes the
horizontal joins.
This is exactly \(f_\eta\), with no
additional outside Euler factor.
\end{proof}

\section{Exact endpoint certificates}\label{app:certificates}

We clear the positive denominator in the endpoint
inequality and obtain a quadratic in \(\delta\).
Its positivity follows by completing the square:
the leading coefficient and \(4\mathcal A_j\mathcal C_j
-\mathcal B_j^2\) both have positive coefficients
as polynomials in \(y=1/2-x\ge0\).
Thus the integer lists below certify the full
parameter rectangle, not just a sampled grid.

For stage \(j=0,1\), let \(E_j\) be
\eqref{eq:endpoint} at its stated parameters.
Put \(y=1/2-x\).
The positive denominator is
\[
 \mathcal J_{\kappa_j}
 =(\alpha-\delta)\{5/2-c_j+(1+2c_j)y\}
       +\delta\{1-c_j/2+2y+2c_jy^2\}.
\]
The following are exact polynomial identities:
\begin{equation}\label{eq:certificate-template}
 2\Lambda_j\mathcal J_{\kappa_j}(-E_j-g_j)
       =\mathcal A_j(y)\delta^2
                    +\mathcal B_j(y)\delta+\mathcal C_j(y).
\end{equation}

\subsection{The first stage}

At \eqref{eq:stage-zero}, take
\[
 g_0=\frac1{1000000},\qquad
 \Lambda_0=2916000000.
\]
Then
\begin{align*}
 \mathcal A_0(y)={}&1209956550+2343372300y\\
                  &+1740394800y^2+794068800y^3,\\
 \mathcal B_0(y)={}&-1135891798-1173235148y+258347816y^2,\\
 \mathcal C_0(y)={}&266598135+244982070y.
\end{align*}
The coefficients of
\(4\mathcal A_0\mathcal C_0-\mathcal B_0^2
=\sum_{r=0}^4 d_{0,r}y^r\) are
\begin{align*}
 d_{0,0}&=38461880464196,\\
 d_{0,1}&=1019289016601843792,\\
 d_{0,2}&=3362710436662080432,\\
 d_{0,3}&=3158456603732169536,\\
 d_{0,4}&=711386879353694144.
\end{align*}

\subsection{The final stage}

At \eqref{eq:stage-one}, take
\[
 g_1=\frac1{10000000},\qquad
 \Lambda_1=1457125331220000000.
\]
Then
\begin{align*}
 \mathcal A_1(y)={}&604514027207496297
                     +1171060420363058502y\\
                  &+869869372328057808y^2
                     +396910005991680000y^3,\\
 \mathcal B_1(y)={}&-567451842451072679
                     -586296072701709724y\\
                  &+128972962522474840y^2,\\
 \mathcal C_1(y)={}&133165863887059575
                     +122393853878983830y.
\end{align*}
The coefficients of
\(4\mathcal A_1\mathcal C_1-\mathcal B_1^2
=\sum_{r=0}^4 d_{1,r}y^r\) are
\begin{align*}
 d_{1,0}&=937158609712709801654383338059,\\
 d_{1,1}&=254346722868854219618398387158475448,\\
 d_{1,2}&=839298823187405114798749944189373584,\\
 d_{1,3}&=788518797558558572150007715039666880,\\
 d_{1,4}&=177683356043985413856187219704574400.
\end{align*}

\subsection{Why the certificates prove the inequalities}

Every displayed coefficient of \(\mathcal A_j\)
and of \(4\mathcal A_j\mathcal C_j-\mathcal B_j^2\)
is positive.
For \(y\ge0\) and every real \(\delta\),
\[
 \begin{split}
 4\mathcal A_j
   (\mathcal A_j\delta^2+\mathcal B_j\delta+\mathcal C_j)
 ={}&(2\mathcal A_j\delta+\mathcal B_j)^2\\
    &+4\mathcal A_j\mathcal C_j-\mathcal B_j^2>0.
 \end{split}
\]
The multiplier on the left of
\eqref{eq:certificate-template} is positive
for \(0\le\delta\le5/6\), \(0\le x\le1/2\).
Therefore \(E_j<-g_j\) on the entire
rectangle. No finite sampling argument
is used.

\section{Geometry and the remaining row ranges}
\label{app:ranges}

The endpoint certificate controls intermediate
nonprincipal rows above the detector floor.
For completeness, we verify its scale extension
and prime supply, then treat the small-\(\delta\)
classes, the floor, small rows, distant rows, and
the principal remainder. These cases account for
all terms in the high representation.

Use \(d_{\min}=1/100\), and set
\[
 \begin{array}{c|cc}
 j&\zeta_j&\text{retained high margin}\\ \hline
 0&10^{-7}&1/2000000\\
 1&10^{-9}&1/20000000.
 \end{array}
\]
In both cases
\[
 g_j-2\zeta_j>
 \text{the displayed retained margin},\qquad
 \frac{\ell_j}{h_j+\zeta_j}>\frac{21}{100}.
\]
Thus the count and short-block gate apply
throughout \(d_{\min}\le d\le h_j+\zeta_j\),
with a mesh chosen for the entire range of
\(\ell_i/d\).
At both geometries,
\[
 6\kappa_j\ell_j>3/4,\quad
 \ell_j/h_j<1/(6\kappa_j),\quad
 l_{y,j}<1/2,\quad 2l_{y,j}>h_j.
\]
The low scale margins are
\[
 \begin{array}{c|cc}
 j&l_{x,j}-\ell_j&
            l_{y,j}-\ell_j-11b_j/6\\ \hline
 0&8243/40000&16079/120000\\
 1&2060527/10000000&669701/5000000.
 \end{array}
\]
All are positive, and \(1/6<\ell_j<1/5\).

\paragraph{Small \(\delta\).}
Proposition~\ref{prop:row-count} and \(x\le1/2\)
give from \eqref{eq:small-delta-endpoint}
\[
 E_{\rm small}\le C_0+\delta
 \left\{\frac12+\frac{\ell}{2}
          (1/2+2\ell/h)-\frac h2\right\}.
\]
The coefficient of \(\delta\) is positive.
At \(\delta=1/8\), the two upper bounds are
\[
 -\frac{191530333}{45730080000},\qquad
 -\frac{47785783297087}{11432841120000000},
\]
respectively. Both are less than \(-1/250\).
These are the endpoint scalar checks required by
Proposition~\ref{prop:high}, which already includes
the extension to \(h_j+\zeta_j\).

\paragraph{The floor.}
At \(\delta_0=1/50\), use the trivial count
\(R=1\), \(q\le1/100\), and the old
pointwise reflected numerator contour.
Its endpoint is
\[
 C_0+(1/2+3\ell/2)/50.
\]
At the two geometries it equals
\[
 -\frac{5837}{500000},\qquad
 -\frac{8748053}{750000000},
\]
both less than \(-1/100\).
Its frequency slope is positive and less
than \(2\), so the same extension is harmless.

\paragraph{Small and bounded nonprincipal rows.}
Move to
\[
 \Rea s=\beta_*+e,\qquad
 \Rea w=1/2,\qquad \Rea z=17/50.
\]
Because \(\beta_*>B_j>87/100\) in the
continuation contradiction, this path is
inside the first Euler region with
\(\epsilon_0=37/100\).
The full correction has the crude bound
\[
 |\mathfrak H_{\eta,u,Z}|
       \ll(\N u)^{1/2+\eps}Z^{\ell z_0}
\]
on its retained strip: selected good-prime
factors are bounded, and the product of
selected ramified factors costs at most
\((\N u)^{1/2}\).
The source Hecke bound at \(w=1/2\) costs
at most \((\N u)^{3/5}\), with a fixed
polynomial height factor.
Retaining \((\N u)^{-z_0}\) and summing
the rows up to \(Z^{d_{\min}}\) is bounded
by \(Z^{2d_{\min}+\eps}\).
Consequently the exponent relative to
\(C_b(\beta_*)\) is at most
\[
 h(z_0-1/6)-l_y/2+2d_{\min}.
\]
Its values are
\[
 -\frac{433393}{6000000},\qquad
 -\frac{108354377}{1500000000},
\]
both less than \(-1/20\).
This includes the bounded nonprincipal
unit rows omitted from the moment argument.

\paragraph{Distant rows.}
For \(\N u>Z^{h+\zeta}\), use the absolute
lines \(s=w=2\) and \(z=z_\infty>2\).
The unchanged full-correction estimate
gives a tail exponent
\[
 B_{\rm abs}+(h+\zeta)(1+\eps)
                         -\zeta z_\infty+\eps,
 \qquad B_{\rm abs}=l_x/2+1+l_y.
\]
A sufficiently large fixed \(z_\infty\)
gives any required saving. It is chosen
before the target and before the external
tail order.

\paragraph{The principal row.}
The margins in Lemma~\ref{lem:principal-remainder}
are \(l_y/20\), \(h/600\), and any \(\mu\)
in \eqref{eq:principal-mu}.
They are strictly positive at both
geometries. Their exact first two values
are
\[
 \begin{array}{c|cc}
 j&l_{y,j}/20&h_j/600\\ \hline
 0&55163/2400000&31757/24000000\\
 1&13791227/600000000&2646491/2000000000.
 \end{array}
\]

\section{Common choices for the high-side estimate}
\label{app:choices}

The separate estimates must hold for one common
choice of probe. In particular, the prime windows
must be fixed before taking \(Z\) large, and the
final saving must be independent of the target
character. This appendix details the compatible
order of choices used in Proposition~\ref{prop:high}.
The continuation argument itself is contained in
Lemma~\ref{lem:closure}.

Fix an admissible parameter tuple and suppose
\(\Delta=\beta_*-B(\ell)>0\), with
\(\beta_*\le(1+\kappa)/2\). Use the retained
margin \(g_*\) from its admissibility conditions.

First choose the common moment losses,
capacity decrements and amplitude-rounding
allowance inside fixed small fractions
of \(g_*\) and \(\Delta\).
The source moment then supplies a mesh
independent of the slot count.
Choose a fixed even \(K\) large enough
that every \(\ell_i/d_{\min}\) fits that
mesh, and take disjoint nonnegative
smooth windows of those lengths.
Equal lengths with disjoint fixed
subintervals of the normalized annulus
are one possible choice.

After \(K\) is fixed, choose
\[
 \mu=\frac{87}{200}\min_i\ell_i,\qquad
 \gamma=\frac1{100}
       \min\{\Delta,g_*,l_y/20,h/600,\mu\}>0.
\]
Choose the bin width \(e\) and all
remaining small real losses so that
they consume only fixed small fractions
of these margins.
These choices are independent of the target.

For each fixed target, retain exactly
the source arithmetic data and choose
all required internal seminorm and
height orders.
Only then choose \(\tau>0\), with
\(\tau\le d_{\min}/100\), so that
the fixed height powers in the central
estimates are absorbed within the
reserved margins when \(T_1=Z^\tau\).
Assign fixed shares of the cumulative
height allowance to the detector
frequencies. Choose the external
tail order \(N\) afterwards, large
enough that every
\(Z^{B_\eta}T_1^{-N}\) is negligible.
Increasing \(N\) changes external test
seminorms, not an already fixed
arithmetic height exponent.
The physical probe and its scalar
normalizer do not depend on \(T_1\).

The high bounds now give
\[
 |J_\eta(Z)-f_\eta(Z)|\ll_\eta
       Z^{C_b(\beta_*)-2\gamma}.
\]
All saving exponents are common to
the targets; only constants, height
choices and thresholds may depend on
their fixed arithmetic data.

\section{Index of analytic inputs}\label{app:sources}

This index collects the source references used in
Section~\ref{sec:inputs} and the later proofs.
All statement numbers for \cite{OAI26} refer to its
September~30, 2026 version. Besides its \(7/8\)
conclusion, the imported inputs are as follows:
\begin{enumerate}
\item The character conventions, Hecke growth,
      logarithmic control, Euler-factor bounds, smooth
      separation, and finite-height Sobolev estimates
      of \cite[Section~4]{OAI26}.
\item The completed probe and its local identities
      in \cite[Sections~6--7 and 12--16]{OAI26},
      the reflected-energy estimate
      \cite[Lemma~14.3]{OAI26}, and the low separation
      and additive Gram bound
      \cite[Section~15.1 and Proposition~15.2]{OAI26}.
\item The simultaneous witnesses
      \cite[Proposition~8.3]{OAI26}, the marked and
      amplified inverse moments
      \cite[Lemmas~17.1 and 17.6]{OAI26}, and the
      plain fourth moment \cite[Lemma~18.1]{OAI26}.
\item The pointwise witness and prime-amplitude bounds
      \cite[Lemmas~8.2 and 19.1]{OAI26}, together with
      the buffered-bin construction used in
      Section~\ref{sec:inputs}.
\item The contour and continuation arguments
      \cite[Lemmas~10.3--10.6]{OAI26} and the transfer
      to Dirichlet \(L\)-functions
      \cite[Proposition~11.3]{OAI26}.
\end{enumerate}
The high-side comparison for the present probe is
derived in Section~\ref{sec:high}; it is not an
application of the final parameter comparison in
\cite{OAI26} to the new lengths.

\end{document}